\theoremstyle{definition}
\newtheorem{theorem}{Theorem}[section]
\newtheorem{corollary}[theorem]{Corollary}
\theoremstyle{definition}
\newtheorem{definition}[theorem]{Definition}
\newtheorem{example}[theorem]{Example}
\theoremstyle{remark}
\numberwithin{equation}{section}
\numberwithin{equation}{section}
\begin{document}

\title{Foundations of the Colored Jones Polynomial of singular knots}

\author{Mohamed Elhamdadi}
\address{Department of Mathematics, University of South Florida, 
Tampa, FL 33647 USA}
\email{emohamed@math.usf.edu }

\author{Mustafa Hajij}
\address{Department of Mathematics, University of South Florida, 
Tampa, FL 33647 USA}
\email{mhajij@usf.edu}

\begin{abstract}
This article gives the foundations of the colored Jones polynomial for singular knots. We extend Masbum and Vogel's algorithm \cite{MV} to compute the colored Jones polynomial for any singular knot. We also introduce the tail of the colored Jones polynomial of singular knots and use its stability properties to prove a false theta function identity that goes back to Ramanujan.
    
\end{abstract}

 \maketitle

 \tableofcontents
\section{Introduction}
The colored Jones polynomial $J_{n,L}(q)$ of a link $L$ is a sequence of Laurent polynomials in the variable $q$. The label $n$ is a positive integer which usually stands for the color. The study of the Jones polynomials and, in general, quantum invariants have attracted much attention in the past 30 years.  Particularly over the past decade, a growing interest has appeared in regards of the coefficients of the colored Jones polynomial. The interest stems mainly from certain stability behavior these coefficients have for adequate knots and links. The stability of the colored Jones polynomial was first observed by Dasbach and Lin in \cite{DL}, where they showed that for an alternating link $L$ the absolute values of the first and last three leading coefficients of $J_{n,L}(q)$ are independent of the color $n$ for sufficiently large values of $n$. This finding was used to derive an upper bound for the volume of the complement of alternating prime non-torus knots in terms of the leading two and last two coefficients of $J_{2,K}(q)$. In the same article, Dasbach and Lin conjectured that the first $n$ coefficients of $J_{n+1,L}(q)$ agree with the last $n$ coefficients of $J_{n,L}(q)$ for any alternating link $L$. Thus, the tail of the colored Jones polynomial for such a stable sequence is a $q-$series whose first $n$ coefficients agree with the first $n$ coefficients of $J_{n,L}(q)$. This stability was then proven for adequate knots by Armond in \cite{Armond}. Furthermore, Garoufalidis and L\^e \cite{GL} gave another proof for alternating knots using different techniques. They also proved that higher order stability occurs for alternating knots. Lee \cite{lee2016trivial} extended Armond's result to all links and showed that the tail of a link $L$ is trivial if and only if $L$ is non A-adequate. Recently, Lee gave categorified version of her result in \cite{lee2016trivial} proving a conjecture of Rozansky \cite{rozansky2012khovanov} stating that the categorification of the colored Jones polynomial of a non A-adequate link has a trivial tail homology. Other work on the stability of the colored Jones polynomial can be found in \cite{Hajij3} where the stability is shown using simple skein theoretic techniques. The work of Armond and Dasbach was then extended to the quantum spin network in \cite{Hajij1} and \cite{Hajij2}. 

One of the primary interests of these coefficients seems to be driven from their relation to the famous Ramanujan-type $q$-series. 
One of the earliest connections with the Ramanujan type $q$-series was observed in \cite{Hikami}, in which the author studied the asymptotic behaviors of the colored Jones polynomials. The $q$-series associated with the colored Jones polynomial exhibits many interesting properties. In fact, for many knots with small crossings, these $q$-series are equal to theta functions or false theta functions. More interestingly, the study of the tail has been used to prove Andrews-Gordon identities for the two-variable, Ramanujan theta function in \cite{CodyOliver} and corresponding identities for the false theta function in \cite{Hajij2}. These two families of $q$-series identities were obtained from investigating $(2,p)$-torus knots. For $q$-series techniques proving these identities, refer to \cite{Robert}. 
The colored Jones polynomial was extended to singular knots using skein theory in \cite{bataineh2016colored}. In this paper, extended Masbum and Vogel's algorithm \cite{MV} to compute the colored Jones polynomial for any singular link. Furthermore, we investigate the stability of these coefficients for certain singular torus knots and show how they can be used to prove natural Ramanujan-type identities.  

Since we want this article to be self contained we include reviews of the necessary material for the convenience of the reader.  The paper is organized as follows:  In section \ref{sec2} we give the basics of the Kauffman bracket skein module and the colored Jones polynomial. In section \ref{sec4} we give the basics of the colored Jones polynomial for singular links and extend Masbum and Vogel's algorithm \cite{MV} to compute the colored Jones polynomial for singular links. In Section \ref{sec5} we compute the tail of the colored Jones polynomial of singular torus knots and we show that this tail gives a natural Ramanujan-type $q$-series identity.

\section{The Kauffman Bracket Skein Module}\label{sec2}

Let $M$ be an oriented 3-manifold. A framed link in $M$ is an oriented embedding of a disjoint union of oriented annuli in $M$.  A framed point in the boundary $\partial M$ of $M$ is a closed interval in $\partial M$. A \textit{band} in $M$ is an oriented embedding of $I \times I$ into $M$ that meets $\partial M$ orthogonally at two framed points $x$ and $y$ in $\partial M$.

\begin{definition} \cite{Przytycki}
Let $M$ be an oriented $3$-manifold and let $A$ be an invertible element in a commutative ring $\mathcal{R}$ with a unit. Let $\mathcal{L}_M$ denotes the set of all isotopy classes of unoriented framed links in $M$. The empty link is considered to be an element of $\mathcal{L}_M$. Let $\mathcal{R}\mathcal{L}_M$ be the free $\mathcal{R}$-module generated by $\mathcal{L}_M$. Let $R(M)$ is the submodule of $\mathcal{R}\mathcal{L}_M$ generated by all expressions of the form
\begin{eqnarray*}(1)\hspace{3 mm}
  \begin{minipage}[h]{0.06\linewidth}
        \vspace{0pt}
        \scalebox{0.04}{\includegraphics{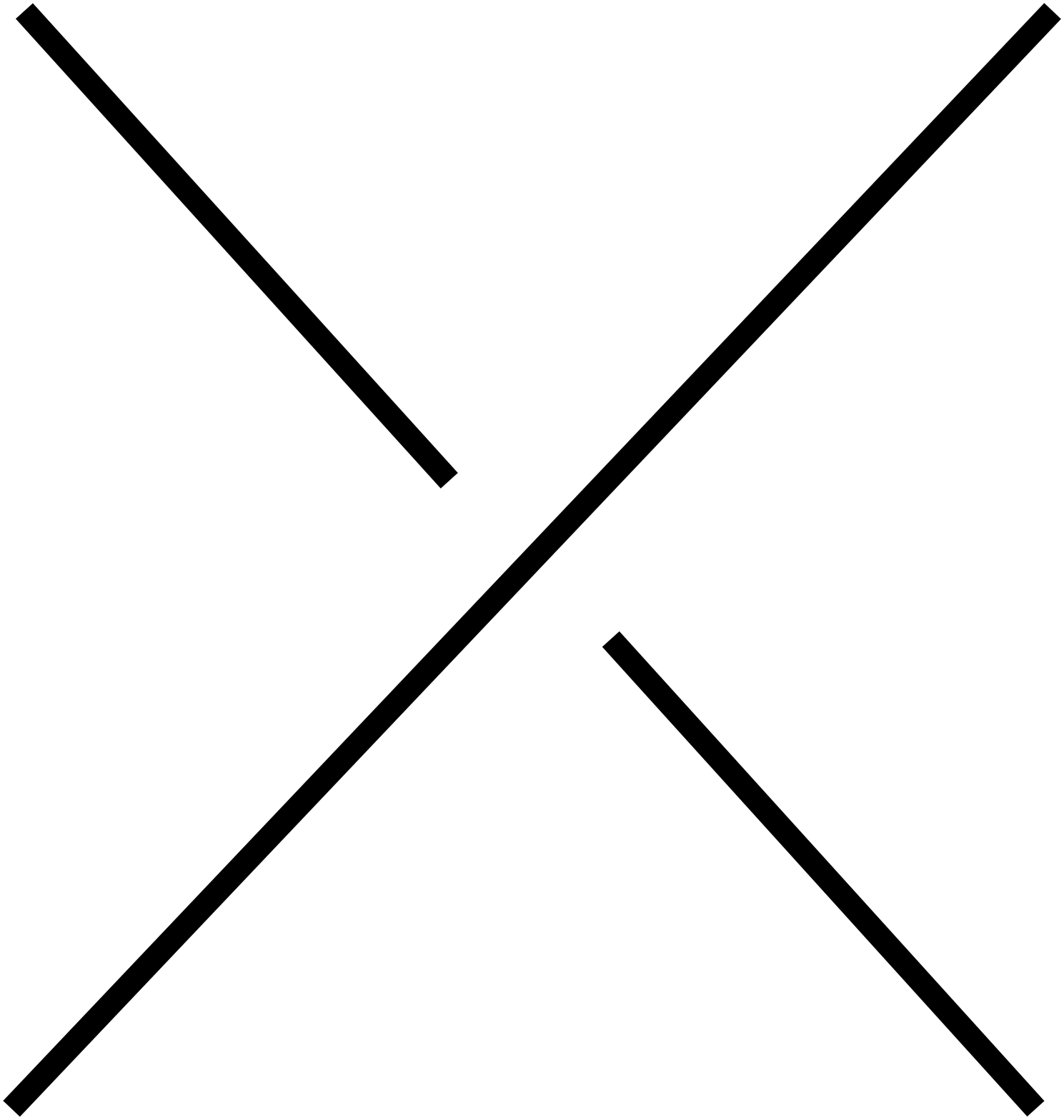}}
   \end{minipage}
   -
     A 
  \begin{minipage}[h]{0.06\linewidth}
        \vspace{0pt}
        \scalebox{0.04}{\includegraphics{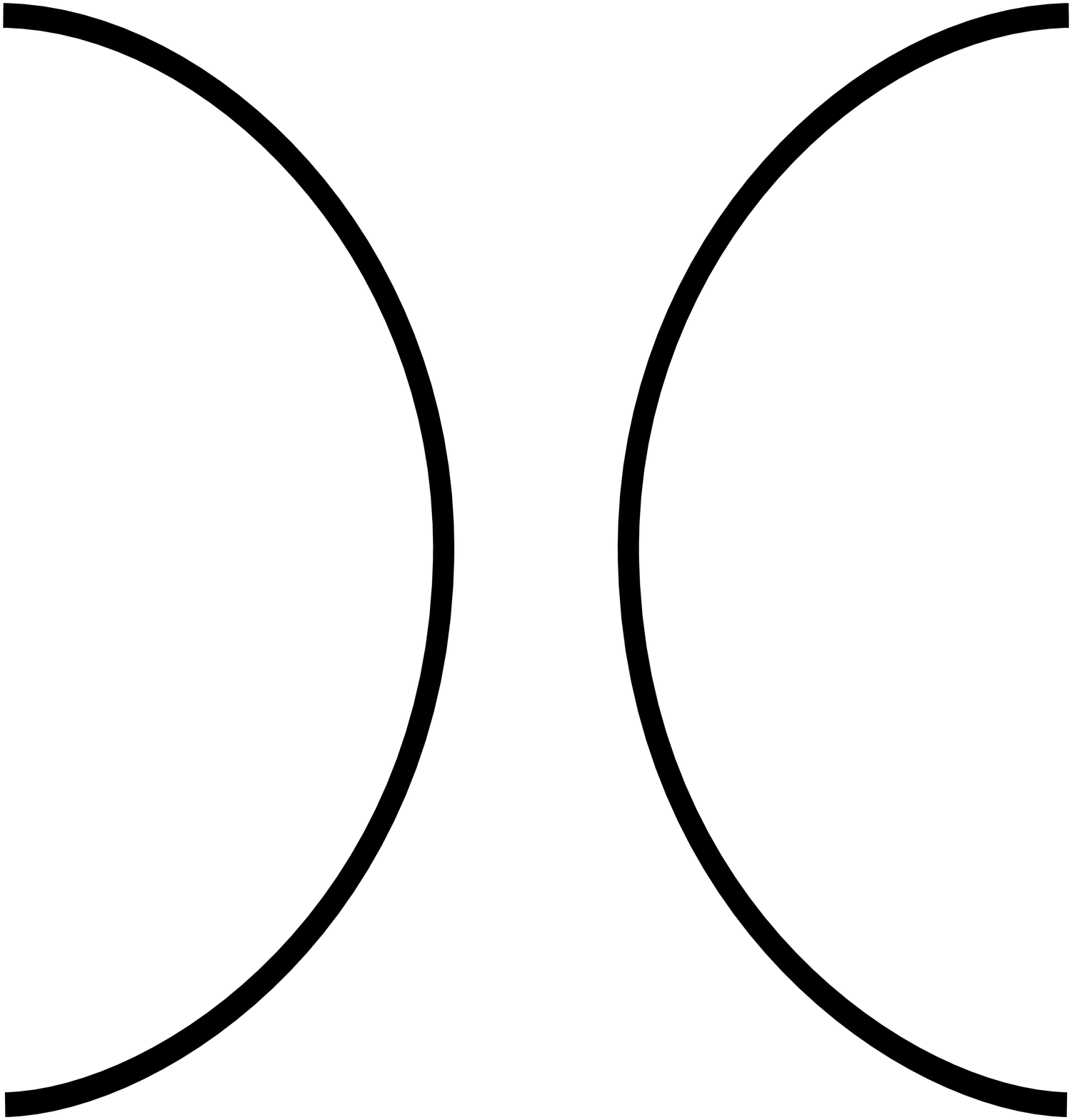}}
   \end{minipage}
   -
  A^{-1} 
  \begin{minipage}[h]{0.06\linewidth}
        \vspace{0pt}
        \scalebox{0.04}{\includegraphics{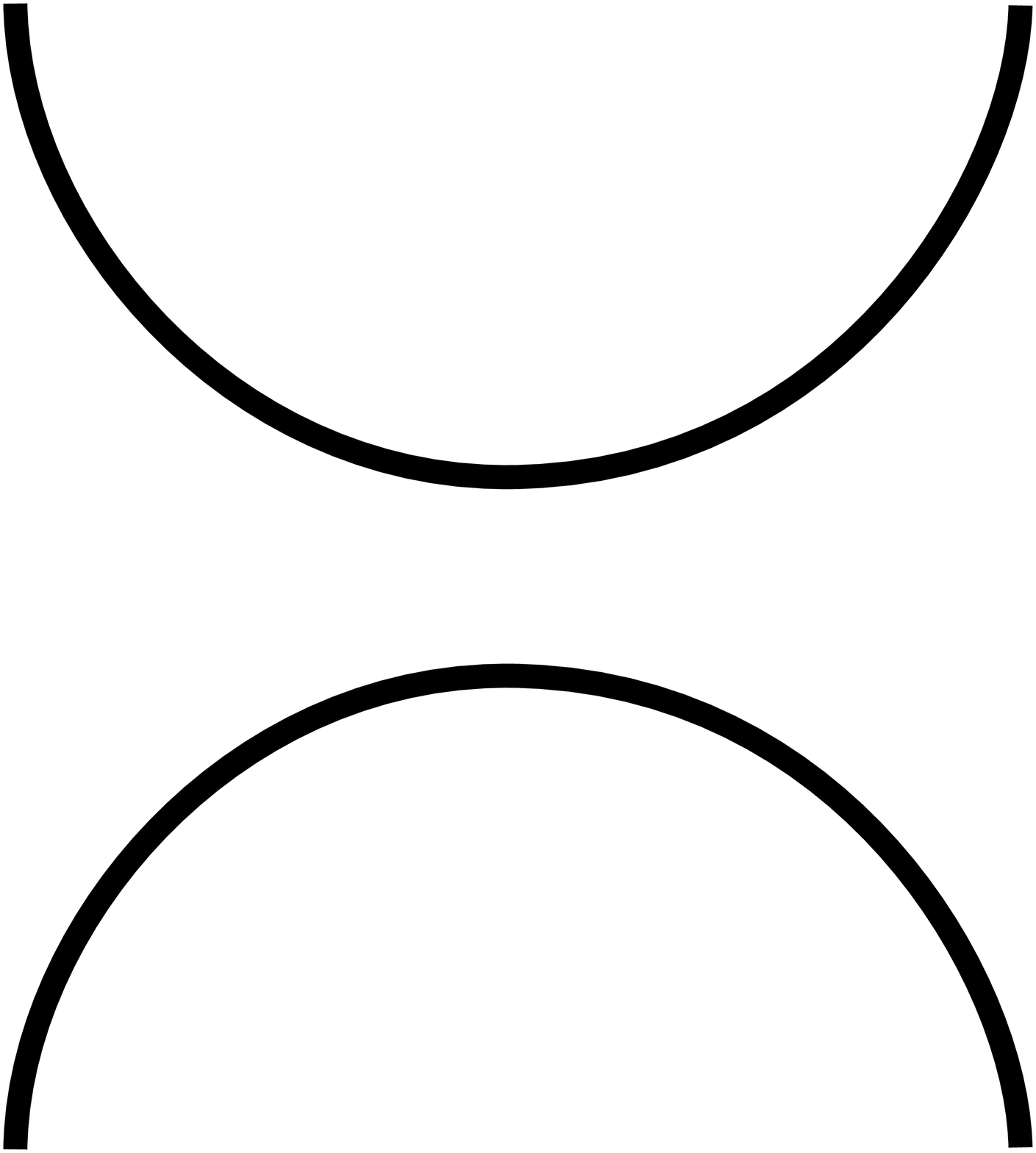}}
   \end{minipage}
, \hspace{20 mm}
  (2)\hspace{3 mm} L\sqcup
   \begin{minipage}[h]{0.05\linewidth}
        \vspace{0pt}
        \scalebox{0.02}{\includegraphics{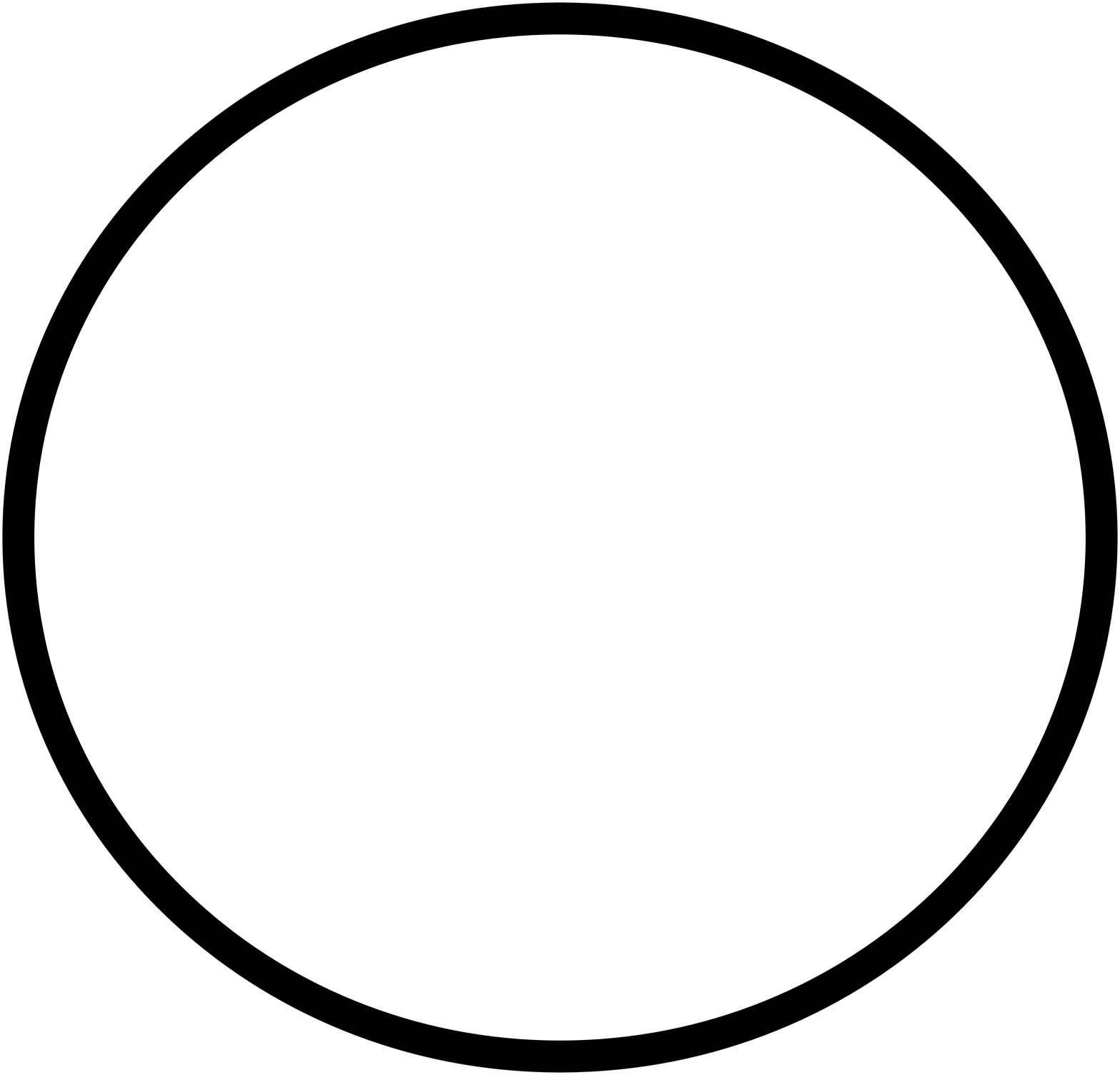}}
   \end{minipage}
  +
  (A^{2}+A^{-2})L, 
  \end{eqnarray*}
where $L\sqcup$ \begin{minipage}[h]{0.05\linewidth}
        \vspace{0pt}
        \scalebox{0.02}{\includegraphics{simple-circle}}
   \end{minipage}  is the disjoint union of a framed link $L$ in $M$ and the trivial framed knot 
   \begin{minipage}[h]{0.05\linewidth}
        \vspace{0pt}
        \scalebox{0.02}{\includegraphics{simple-circle}}
   \end{minipage}.
\textit{The Kauffman bracket skein module} of the $3$-manifold $M$ is the quotient module
\begin{equation}
\mathcal{S}(M,\mathcal{R},A)=\mathcal{R}\mathcal{L}_M/R(M),
\end{equation}

\end{definition}

When the context is clear, we will write $\mathcal{S}(M)$ instead of $\mathcal{S}(M,\mathcal{R},A)$.
The definition of the Kauffman bracket skein module can be extended to $3$-manifolds with boundaries. Let $x_1, \cdots , x_{2n}$ be a set, possibly empty, of designated framed points on $\partial M$. Let $\mathcal{L}_M$ be the set of all surfaces in $M$ decomposed into a union of finite number of framed links and bands joining the points $\{ x_i \}_{i=1}^{2n}$. The relative Kauffman bracket skein module is defined to be the quotient module \begin{equation}
\mathcal{S}(M,\mathcal{R},A,\{ x_i \}_{i=1}^{2n})=\mathcal{R}\mathcal{L}_M/R(M).
\end{equation}
Note that the construction of the relative Kauffman bracket skein module is functorial in the sense that an embedding of oriented $3$-manifolds with $2n$ (framed) points on the boundaries 
\begin{equation}
j: (M,\{ x_i \}_{i=1}^{2n}) \hookrightarrow (M',\{ y_i \}_{i=1}^{2n})
\end{equation}
 induces a homomorphism of $\mathcal{R}$-modules  \begin{equation}
\mathcal{S}(M,\mathcal{R},A,\{ x_i \}_{i=1}^{2n}) \rightarrow \mathcal{S}(M',\mathcal{R},A,\{ y_i \}_{i=1}^{2n}).  
\end{equation}
If the $3$-manifold $M$ is homeomorphic to $F\times I$ where $F$ an oriented surface with a finite set of points (possibly empty) in its boundary $\partial  F$ and $I$ is an interval, then one can project framed links in $M$ to link diagrams in $F$.\\
It is well known (\cite{Przytycki}) that the Kauffman bracket skein module of the $3$-sphere $S^3$ is free on the empty link, that is $\mathcal{S}(S^3)= \mathcal{R}$ . Now we consider the relative Kauffman bracket skein module of $D^3=I\times I \times I$ with $2n$ marked points on its boundary $\partial D^3$.  The first $n$ points are placed on the top edge of $D^3$ and the other $n$ points on the bottom edge. Recall that the relative skein module does not depend on the exact position of the points $\{x_i\}_{i=1}^{2n}$. However, one needs to specify the position here in order to define an algebra structure on  $\mathcal{S}(D^3,\mathcal{R},A,\{ x_i \}_{i=1}^{2n})$. 
 
 Let $S_1$ and $S_2$ be two elements in $\mathcal{L}_{D^3}$ such that $\partial S_j$, where $j=1,2$, consists of the points $\{x_i\}_{i=1}^{2n}$ that we specified above. 
 
 Define $S_1 \times S_2$ to be the surface in $D^3$ obtained by attaching $S_1$ on the top of $S_2$ and then compress the result to $D^3$. This multiplication extends to a well-defined multiplication on $\mathcal{S}(D^3,\mathcal{R},A,\{ x_i \}_{i=1}^{2n})$. With this multiplication the module $\mathcal{S}(D^3,\mathcal{R},A,\{ x_i \}_{i=1}^{2n})$ becomes an associative algebra over $\mathcal{R}$ known as the \textit{$n^{th}$ Temperley-Lieb algebra} $TL_n$. 
The ring $\mathcal{R}$ will be the field $\mathbb{Q}(A)$ generated by the indeterminate $A$ over the rational numbers through the rest of the paper.

\subsection{The Jones-Wenzl Idempotents}
Our invariant is defined in terms of the Jones-Wenzl idempotent (JWI). This is an element in  $TL_n$ denoted by $f^{(n)}$ that has played a crucial role in the understanding of the Temperley-Lieb algebra and its applications. The idempotent has a central role in defining the $SU(2)$ Witten-Reshetikhin-Turaev Invariants  \cite{KauffLin,Lic92,RT}. It also has a major importance in the colored Jones polynomial and its applications \cite{BHMV,RT,Hajij2,Hajij1,TuraevViro92}, and quantum spin networks \cite{MV}. The definition of the projector goes back to Jones \cite{J2}. The recursive formula we will use here goes back to Wenzl \cite{Wenzl}: 
\begin{align}
\label{recursive}
  \begin{minipage}[h]{0.05\linewidth}
        \vspace{0pt}
        \scalebox{0.12}{\includegraphics{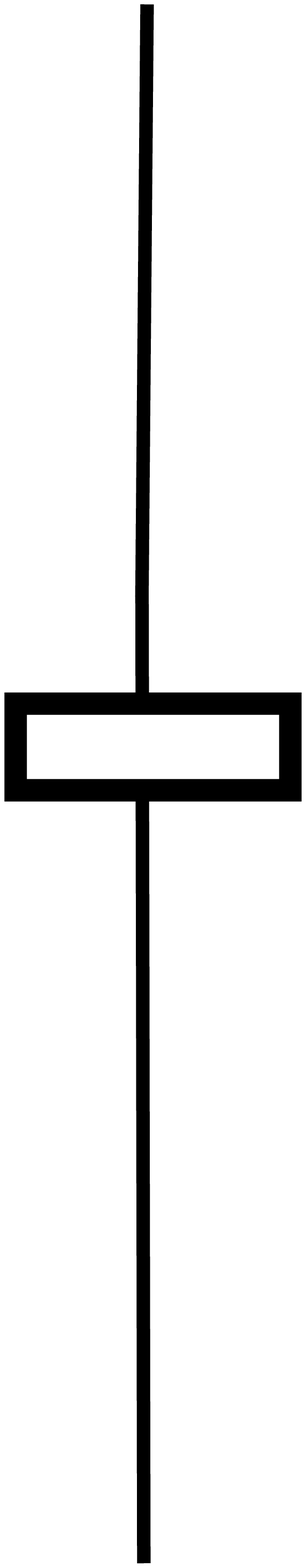}}
         \put(-20,+70){\footnotesize{$n$}}
   \end{minipage}
   =
  \begin{minipage}[h]{0.08\linewidth}
        \hspace{8pt}
        \scalebox{0.12}{\includegraphics{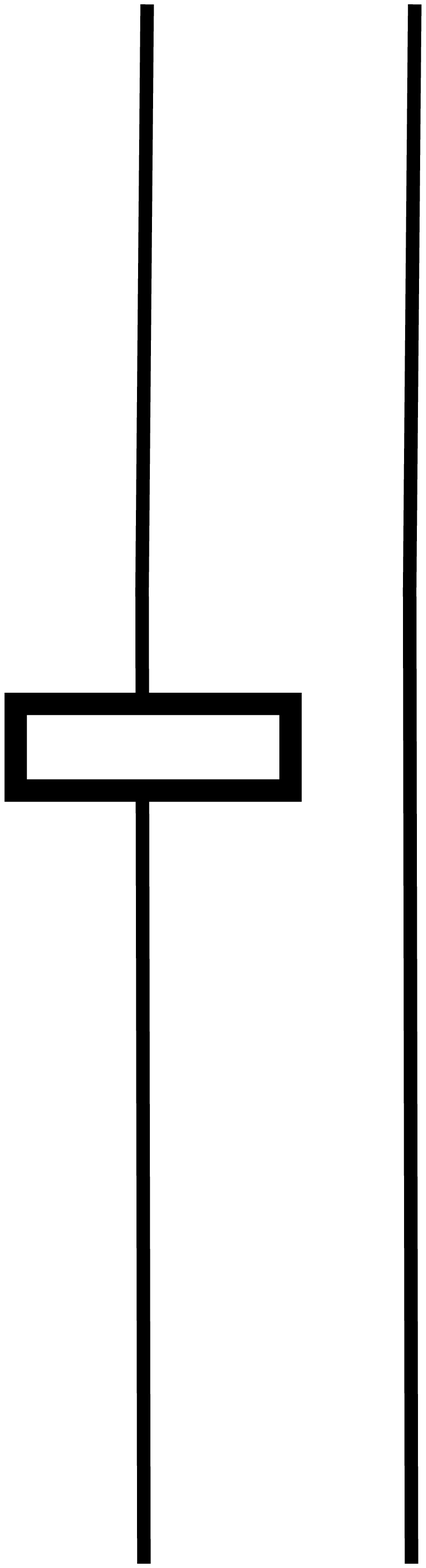}}
        \put(-42,+70){\footnotesize{$n-1$}}
        \put(-8,+70){\footnotesize{$1$}}
   \end{minipage}
   \hspace{9pt}
   -
 \Big( \frac{\Delta_{n-2}}{\Delta_{n-1}}\Big)
  \hspace{9pt}
  \begin{minipage}[h]{0.10\linewidth}
        \vspace{0pt}
        \scalebox{0.12}{\includegraphics{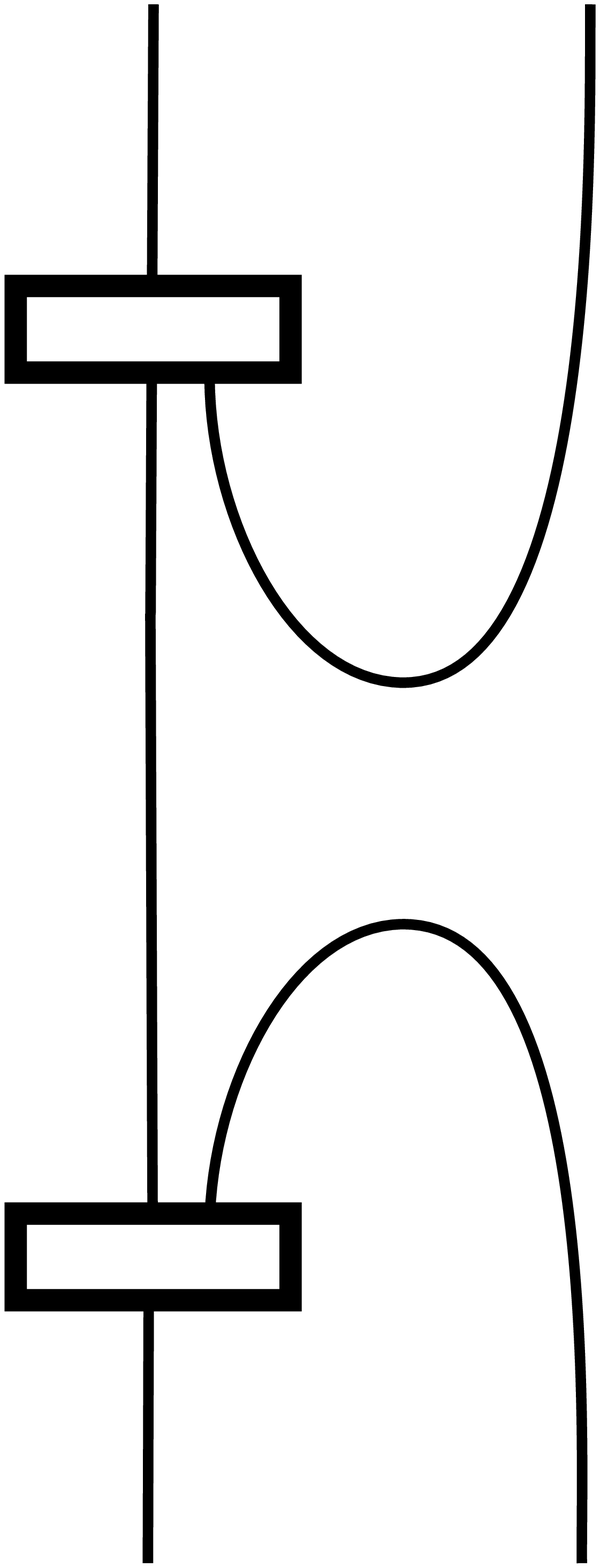}}
         \put(2,+85){\footnotesize{$1$}}
         \put(-52,+87){\footnotesize{$n-1$}}
         \put(-25,+47){\footnotesize{$n-2$}}
         \put(2,+10){\footnotesize{$1$}}
         \put(-52,+5){\footnotesize{$n-1$}}
   \end{minipage}
  , \hspace{20 mm}
    \begin{minipage}[h]{0.05\linewidth}
        \vspace{0pt}
        \scalebox{0.12}{\includegraphics{nth-jones-wenzl-projector}}
        \put(-20,+70){\footnotesize{$1$}}
   \end{minipage}
  =
  \begin{minipage}[h]{0.05\linewidth}
        \vspace{0pt}
        \scalebox{0.12}{\includegraphics{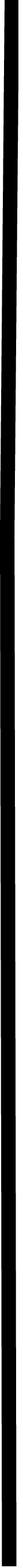}}
   \end{minipage}   
  \end{align}
  where 
\begin{equation*}
 \Delta_{n}=(-1)^{n}\frac{A^{2(n+1)}-A^{-2(n+1)}}{A^{2}-A^{-2}}.
\end{equation*} 

The idempotent satisfies the following characterizing properties: 

\begin{eqnarray}
\label{properties}
\hspace{0 mm}
    \begin{minipage}[h]{0.21\linewidth}
        \vspace{0pt}
        \scalebox{0.115}{\includegraphics{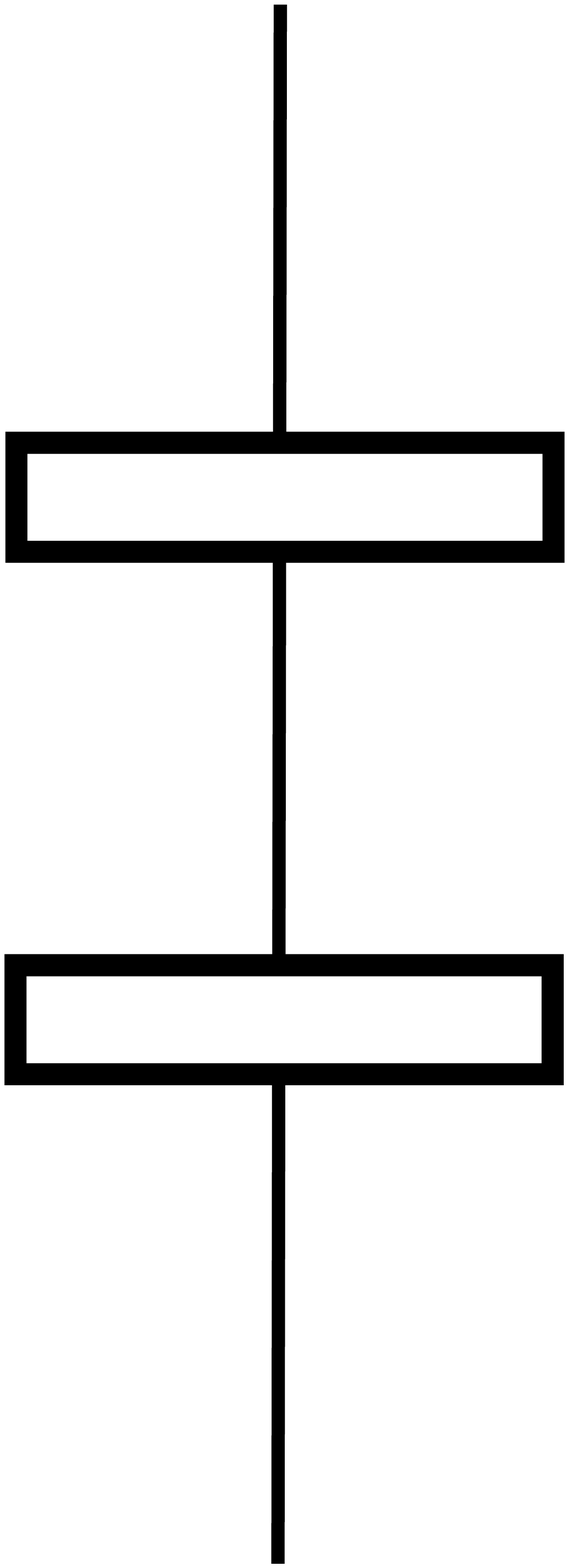}}
        \put(0,+80){\footnotesize{$n$}}
       
   \end{minipage}
  = \hspace{5pt}
     \begin{minipage}[h]{0.1\linewidth}
        \vspace{0pt}
         \hspace{50pt}
        \scalebox{0.115}{\includegraphics{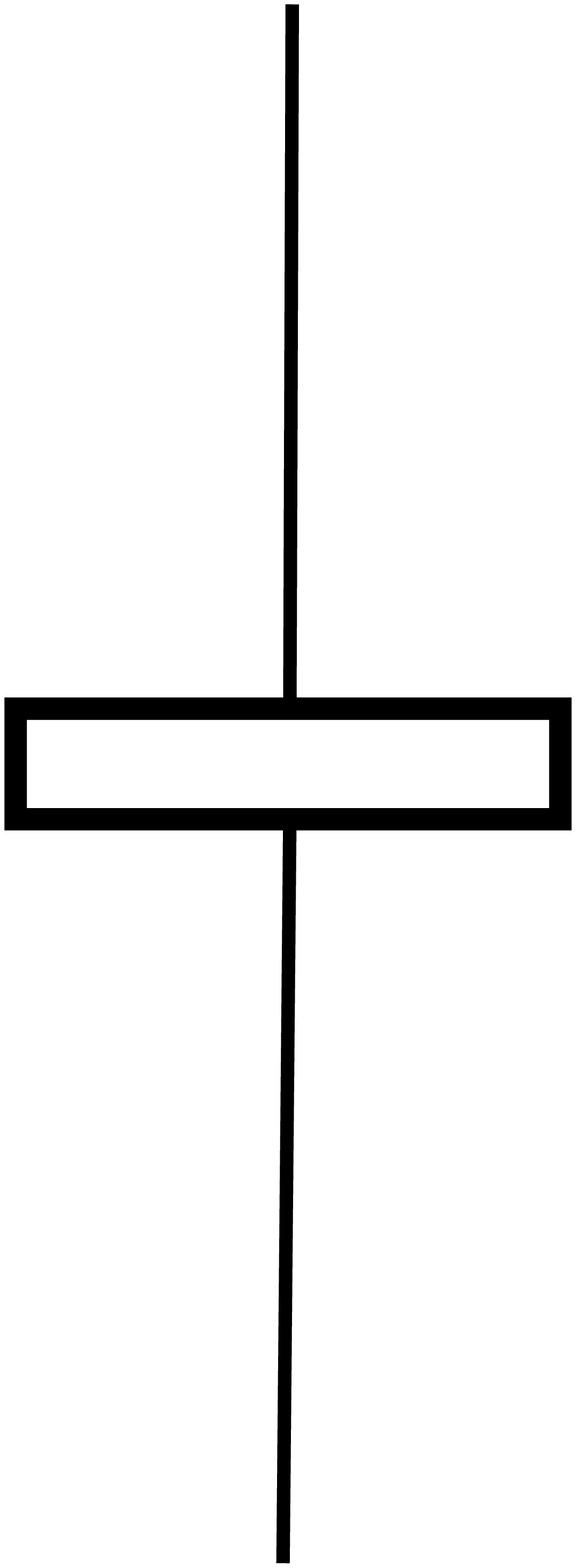}}
        \put(-60,80){\footnotesize{$n$}}
   \end{minipage}
    , \hspace{15 mm}
    \begin{minipage}[h]{0.09\linewidth}
        \vspace{0pt}
        \scalebox{0.115}{\includegraphics{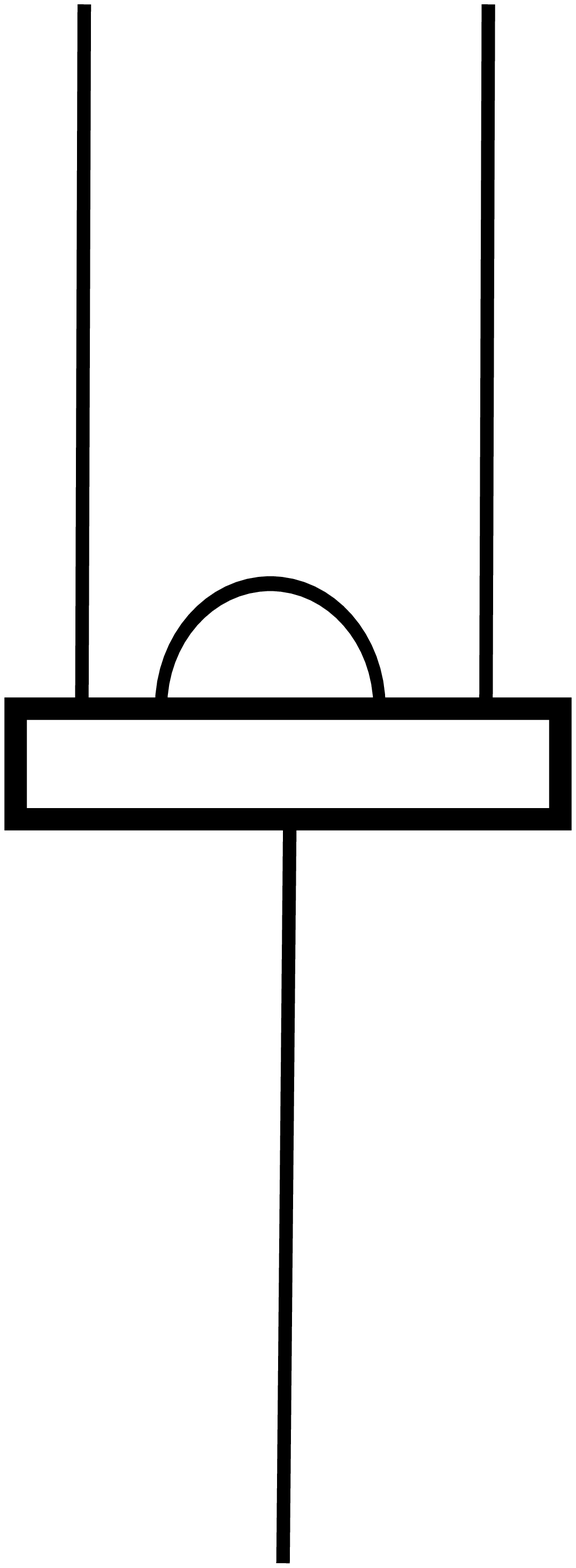}}
         \put(-70,+82){\footnotesize{$n-i-2$}}
         \put(-20,+64){\footnotesize{$1$}}
        \put(-2,+82){\footnotesize{$i$}}
        \put(-28,20){\footnotesize{$n$}}
   \end{minipage}
   =0,
   \label{AX}
  \end{eqnarray}
The first property is called the idempotency property of the JWI and the second property  is called the annihilation property. The JWI also satisfies the following:   
\begin{eqnarray}
\label{properties}
\hspace{0 mm}
     \begin{minipage}[h]{0.08\linewidth}
        \vspace{0pt}
        \scalebox{0.115}{\includegraphics{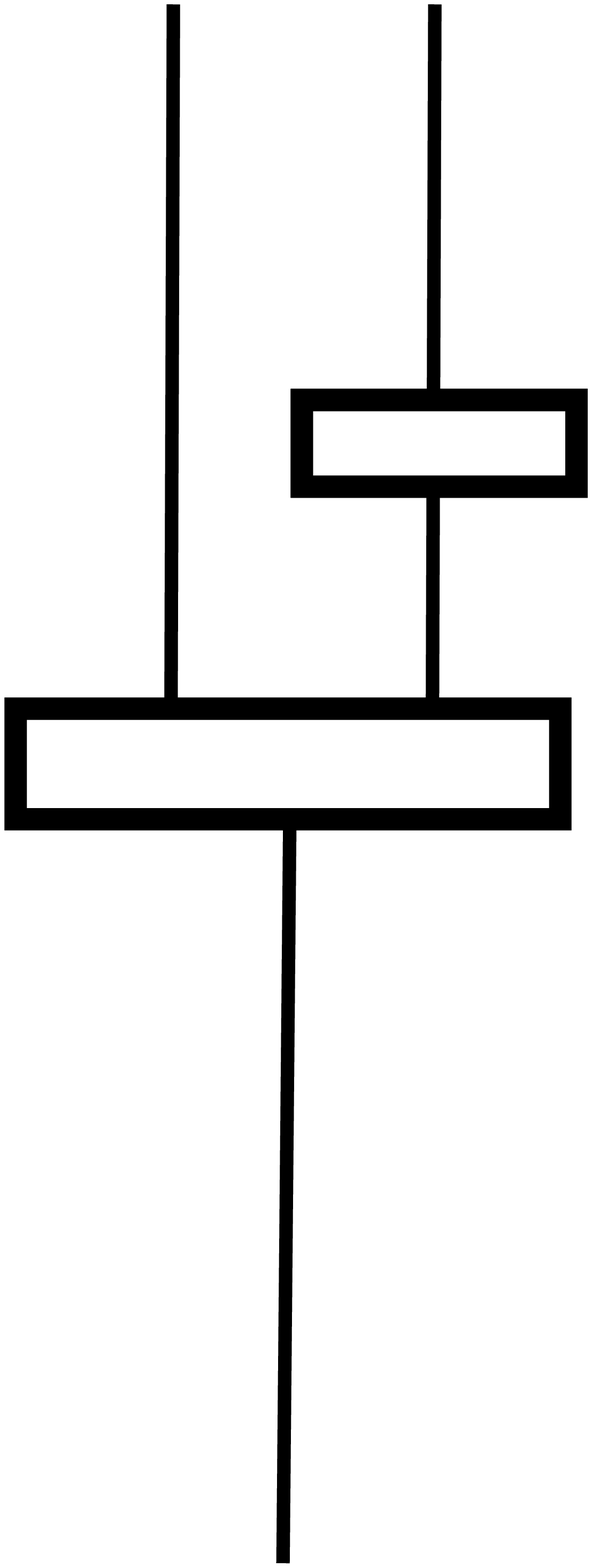}}
        \put(-34,+82){\footnotesize{$n$}}
        \put(-19,+82){\footnotesize{$m$}}
        \put(-46,20){\footnotesize{$m+n$}}
   \end{minipage}
  =
     \begin{minipage}[h]{0.09\linewidth}
        \vspace{0pt}
        \scalebox{0.115}{\includegraphics{idempotent2}}
        \put(-46,20){\footnotesize{$m+n$}}
   \end{minipage},
  \hspace{10pt} 
\Delta_{n}=
  \begin{minipage}[h]{0.1\linewidth}
        \vspace{0pt}
        \scalebox{0.12}{\includegraphics{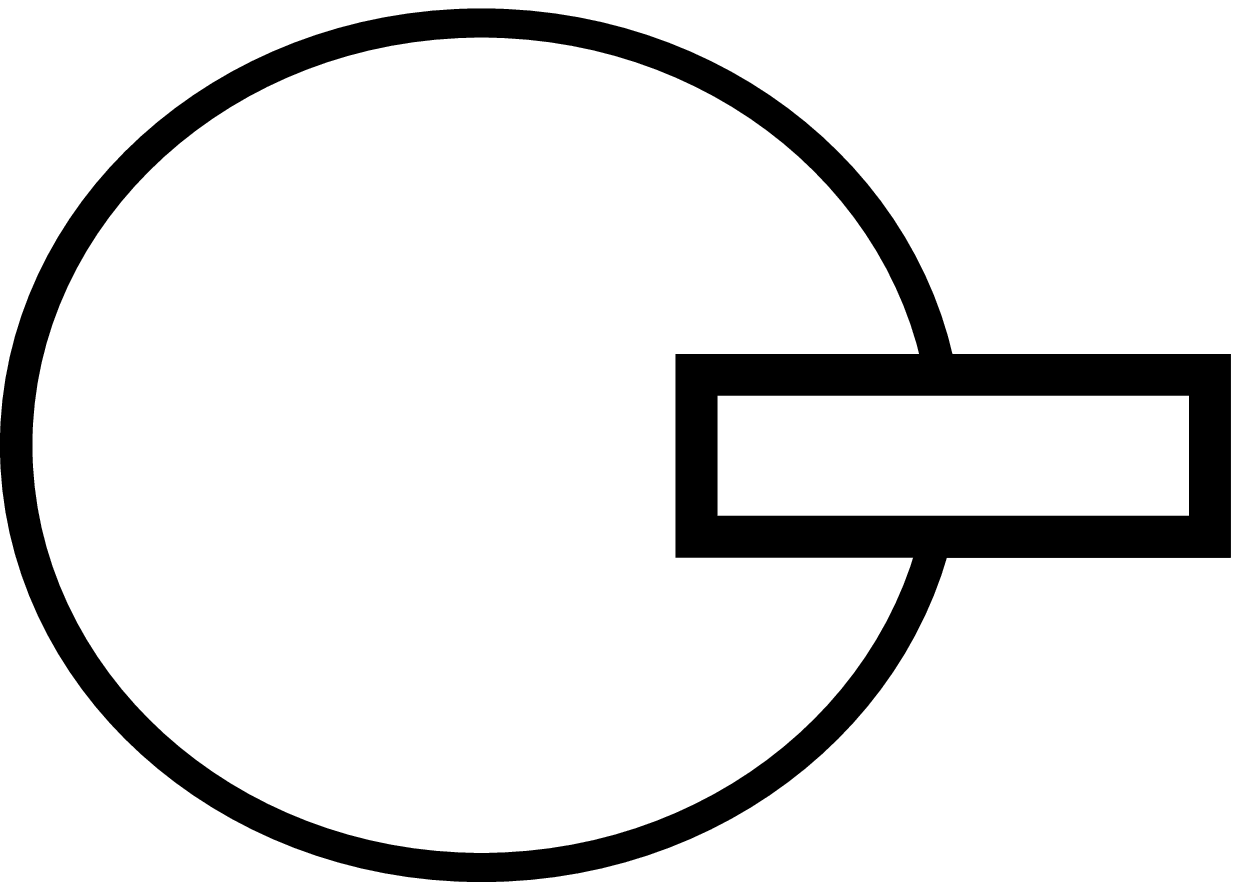}}
        \put(-29,+34){\footnotesize{$n$}}
   \end{minipage}   
  \end{eqnarray}
  \subsection{The Colored Jones Polynomial} \label{sec3}
The singular knot invariant that we are studying in this paper is a generalization of the colored Jones polynomial for classical knots. For this reason we quickly review the basics of the colored Jones polynomial.  Given a framed link $L$ in $S^3$. We decorate every component of $L$, according to its framing, by the $n^{th}$ Jones-Wenzl idempotent and take the evaluation of the decorated framed link as an element of $\mathcal{S}(S^3)$. Up to a power of $\pm A$, this depends on the framing of $L$, the value of this element is defined to be the $n^{th}$ (unreduced) colored Jones polynomial $\tilde{J}_{n,L}(A)$. One recover the reduced Jones polynomial by a change of variable and a division by $\Delta_n$. Namely,
 \begin{equation}
 \label{change of variable}
 J_{n+1,L}(q)=\frac{\tilde{J}_{n,L}(A)}{\Delta_n}\bigg|_{A=q^
 	{1/4}}
 \end{equation}
One of the primary focus of this article is the coefficients stability of an extension of the colored Jones polynomial to singular knots. We give more details about the stability properties that the colored Jones polynomial satisfies in section \ref{sec5}. 
\section{Singular Knots and the Colored Jones Polynomial}
\label{sec4}
We give a quick introduction to the basics of singular knot theory. For more details see \cite{Gemein} and \cite{Fiedler}.
A \textit{singular link} on $n$ components is the image of a smooth immersion of $n$ circles in $S^3$ that has finitely many double points. The double points are usually called \textit{singularities}. See Figure \ref{crossings}. 

\begin{figure}[H]
		\centering
	{\includegraphics[scale=0.25]{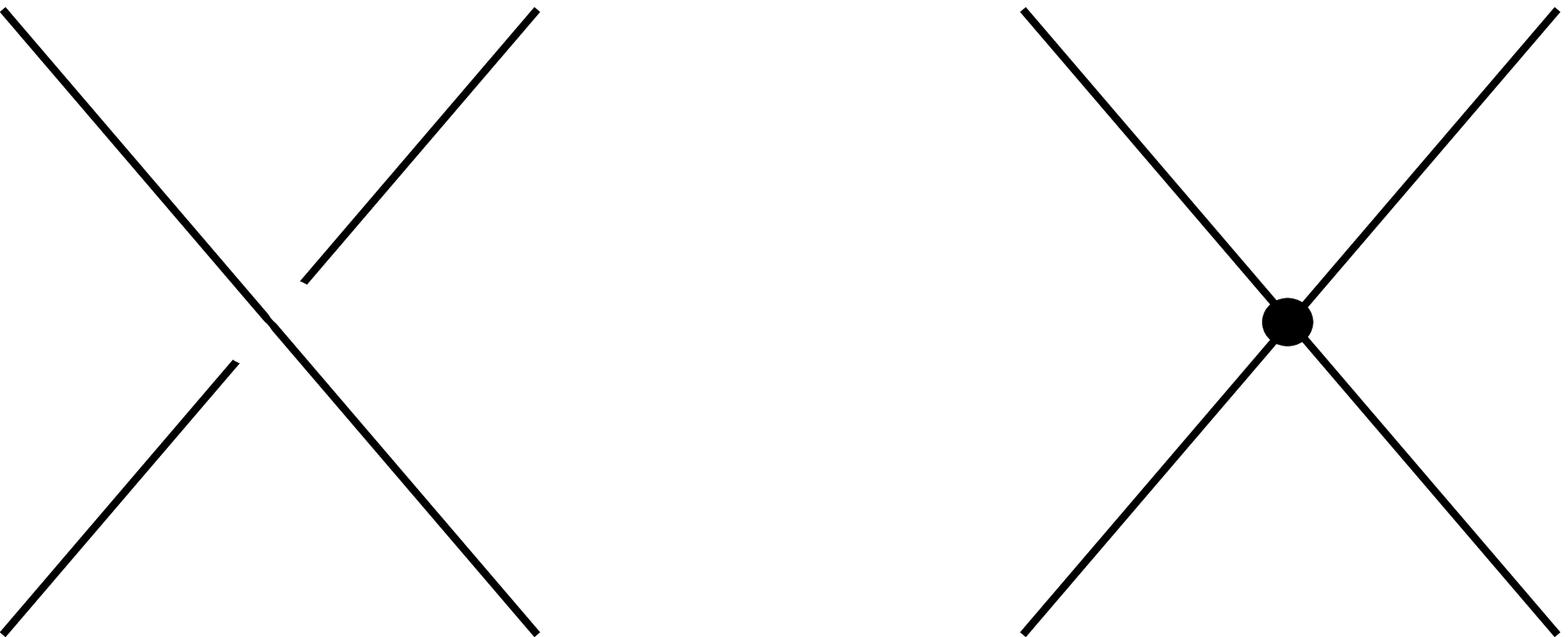}
		\caption{Regular and singular crossings}
		\label{crossings}}
\end{figure}

Singular knots are also called \textit{rigid $4$-valent graphs}. The double points are then referred to as vertices. Two singular knots are ambient isotopic if there is an orientation preserving self-homeomorphism of $S^3$ that takes one link to the other and preserves a small rigid disk around each vertex. In this paper we  work with singular link diagrams which are projections of the link on the plane such that the information at each crossing is preserved by leaving a little break in the lower strand. In this context a version of Reidemeister's theorem holds for singular links. Namely, two singular links $L_1$ and $L_2$ are ambient isotopic if and only if one can obtain a diagram of $L_1$ from a diagram of $L_2$ by a finite sequence of classical and singular Reidemeister moves shown in Figure \ref{Rmoves}.

\begin{figure}[H]
  \centering
   {\includegraphics[scale=0.1]{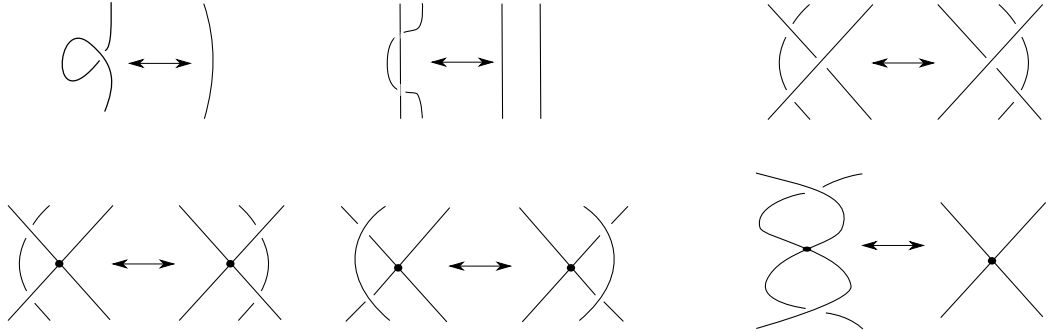}
     \caption{Classical and singular Reidemeister moves. The top three moves are $RI$, $RII$ and $RIII$ are the classical moves and the bottom moves denoted $RIV$ (two diagrams on the left) and $RV$ are the singular Reidemeister.}
  \label{Rmoves}}
\end{figure}
Similar to the case of classical knot theory, if one does not allow the Reidemeister move $RI$ then one obtains what is called  {\it regular isotopy} of singular links.

Singular knot theory have gained a lot of interest in the past two decades. This was primarily motivated by Vassiliev invariants \cite{Vassiliev}. In particular, most classical knot theory invariants have been extended to the singular versions. For instance, Fiedler \cite{Fiedler} extended the Jones and Alexander polynomials to singular knot invariants. In \cite{JL} Juyumaya and Lambropoulou constructed  Jones-type invariant for singular links using a Markov trace on a version of the Hecke algebra. Gemein \cite{Gemein} studied certain extensions of the Artin and the Burau representations to the singular braid monoid. Bataineh, Elhamdadi and Hajij extended the colored Jones polynomial definition to singular knots in \cite{bataineh2016colored}.  Churchill, Elhamdadi, Hajij and Nelson showed that the set of colorings by some algebraic structures is an invariant of unoriented singular links and used it to distinguish several singular knots and links in \cite{singularquandle}. This work was extended to oriented singular knots in \cite{singularquandle1}. Kauffman and Vogel \cite{kaufvog} extended the Dubrovnik polynomial to an invariant of singular knots in $\mathbb{R}^3$. 
Kauffman gave a one variable specialization of the Kauffman-Vogel polynomial utilizing the Jones-Wenzl projector. We will denote this invariant by $[.]_2$. This invariant is defined via the following axioms: 
\begin{enumerate}
\item

$   \left[
   \begin{minipage}[h]{0.08\linewidth}
        \vspace{0pt}
        \scalebox{0.15}{\includegraphics{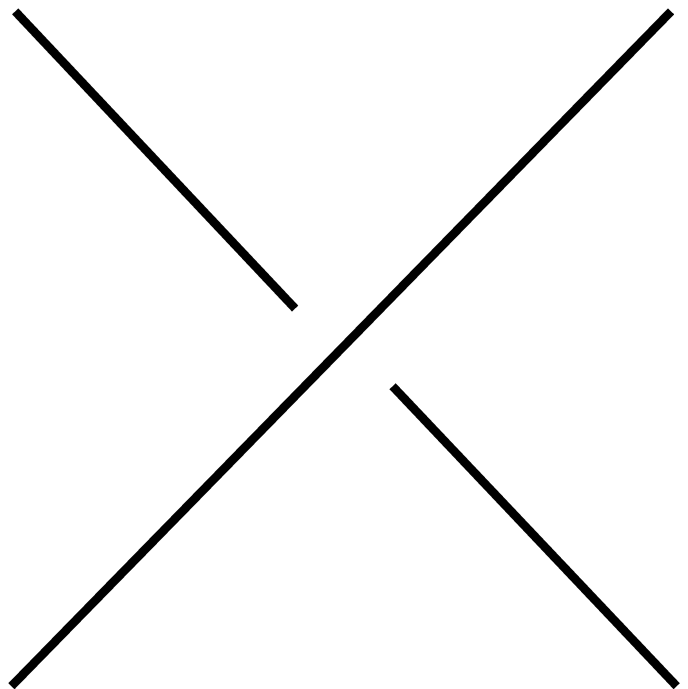}}
   \end{minipage}
  \right]_2  = \begin{minipage}[h]{0.08\linewidth}
        \vspace{0pt}
        \scalebox{0.08}{\includegraphics{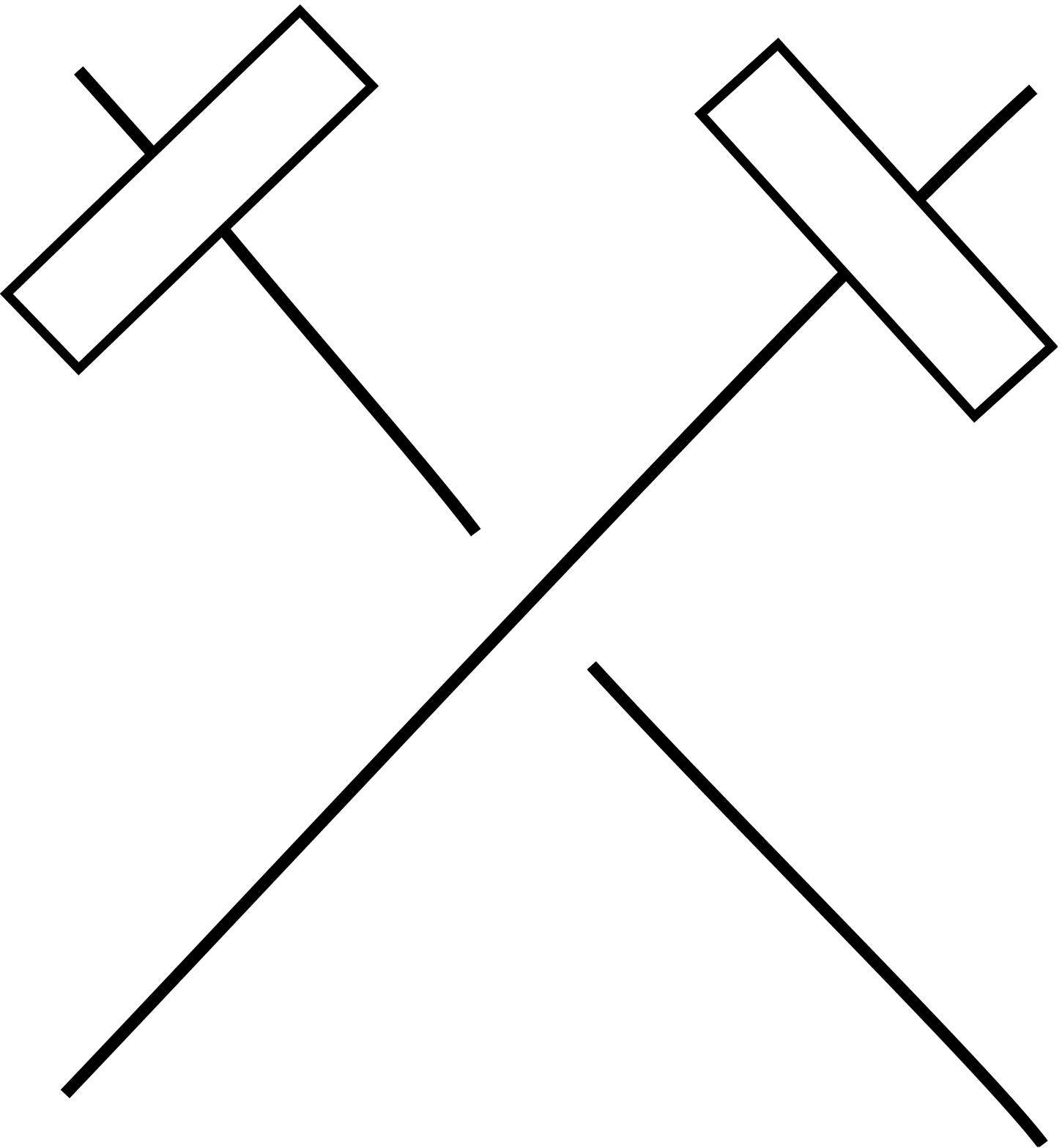}}
        \tiny{
        \put(-1,45){$2$}
        \put(-48,45){$2$}}
   \end{minipage}$
 \item
\vspace{3pt}  
 $ \left[
   \begin{minipage}[h]{0.08\linewidth}
        \vspace{0pt}
        \scalebox{0.08}{\includegraphics{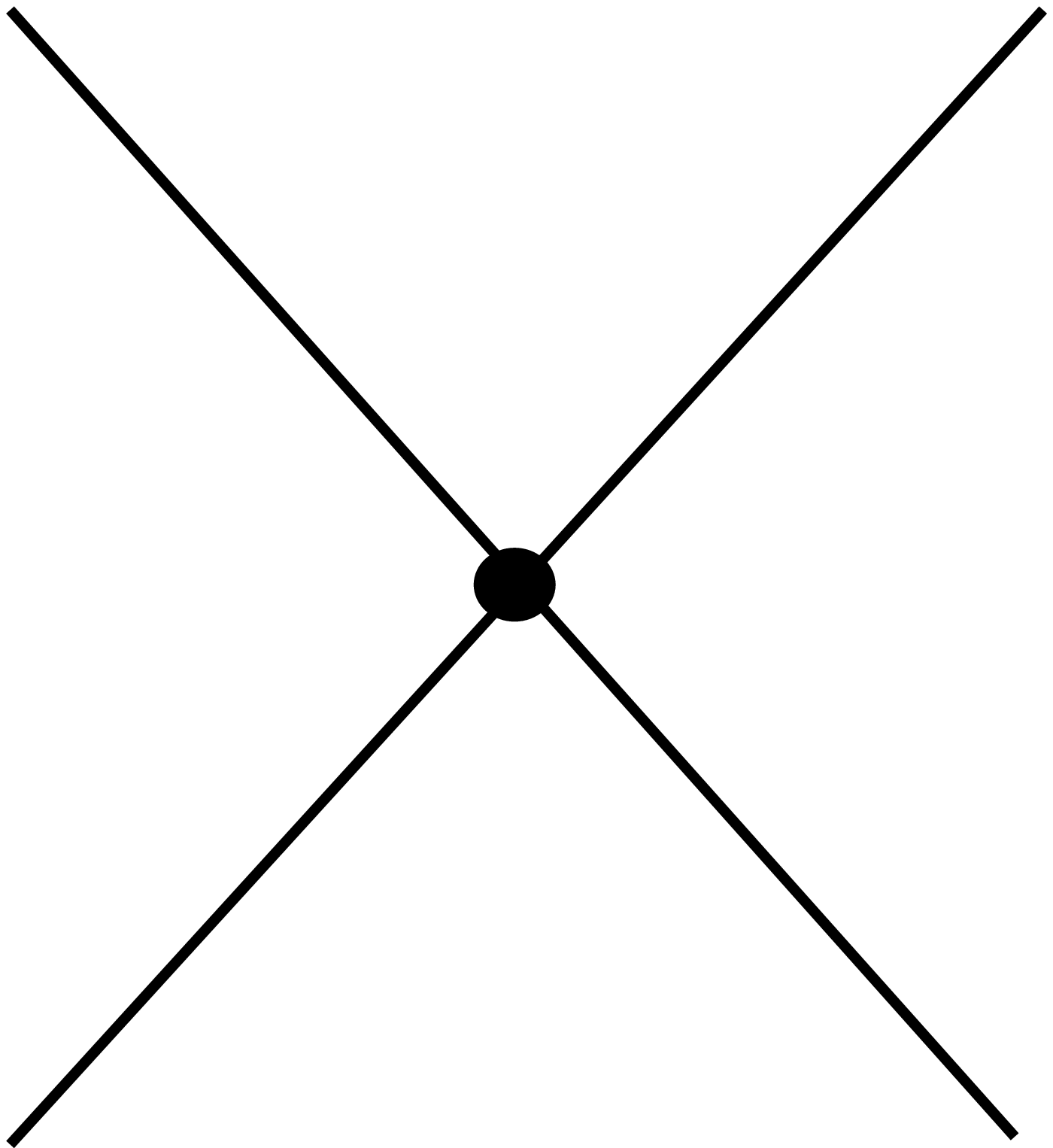}}
   \end{minipage}
  \right]_2=\begin{minipage}[h]{0.08\linewidth}
        \vspace{0pt}
        \scalebox{0.08}{\includegraphics{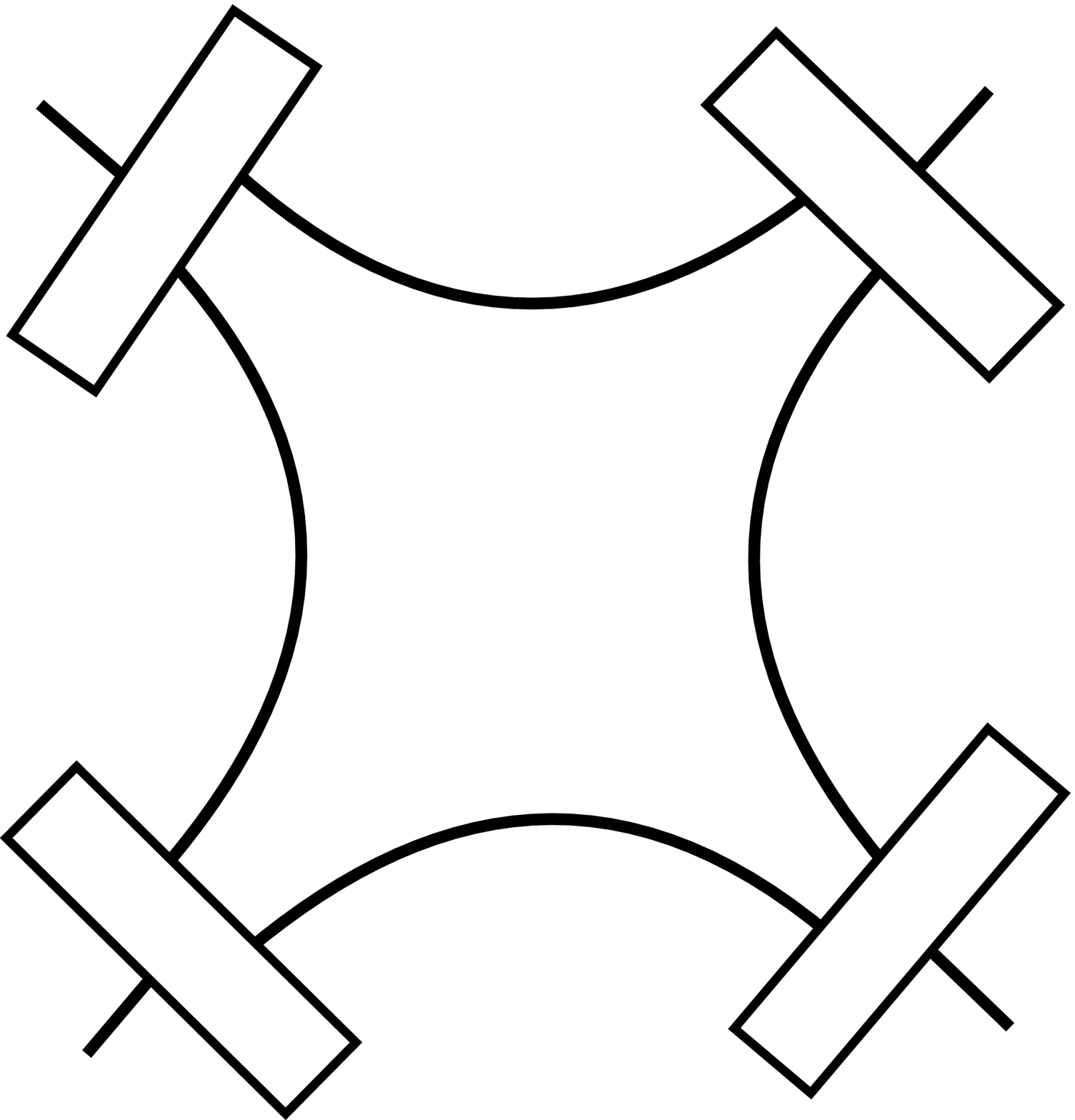}}
        \tiny{
        \put(-28,40){$1$}
        \put(-28,5){$1$}
        \put(-12,20){$1$}
        \put(-40,20){$1$}
        \put(-1,45){$2$}
        \put(-48,45){$2$}
        \put(-1,0){$2$}
        \put(-48,0){$2$}}
   \end{minipage}$ 
 \item
\vspace{3pt}  
 $ \left[
   \begin{minipage}[h]{0.08\linewidth}
        \vspace{0pt}
        \scalebox{0.08}{\includegraphics{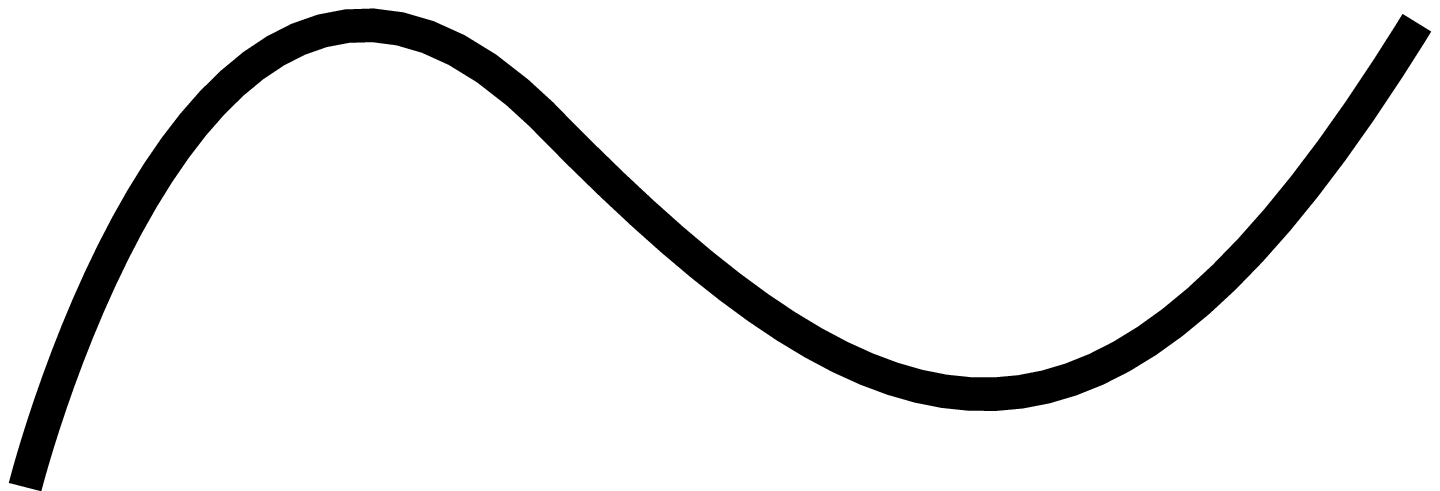}}
   \end{minipage}
  \right]_2=\begin{minipage}[h]{0.08\linewidth}
        \vspace{0pt}
        \scalebox{0.1}{\includegraphics{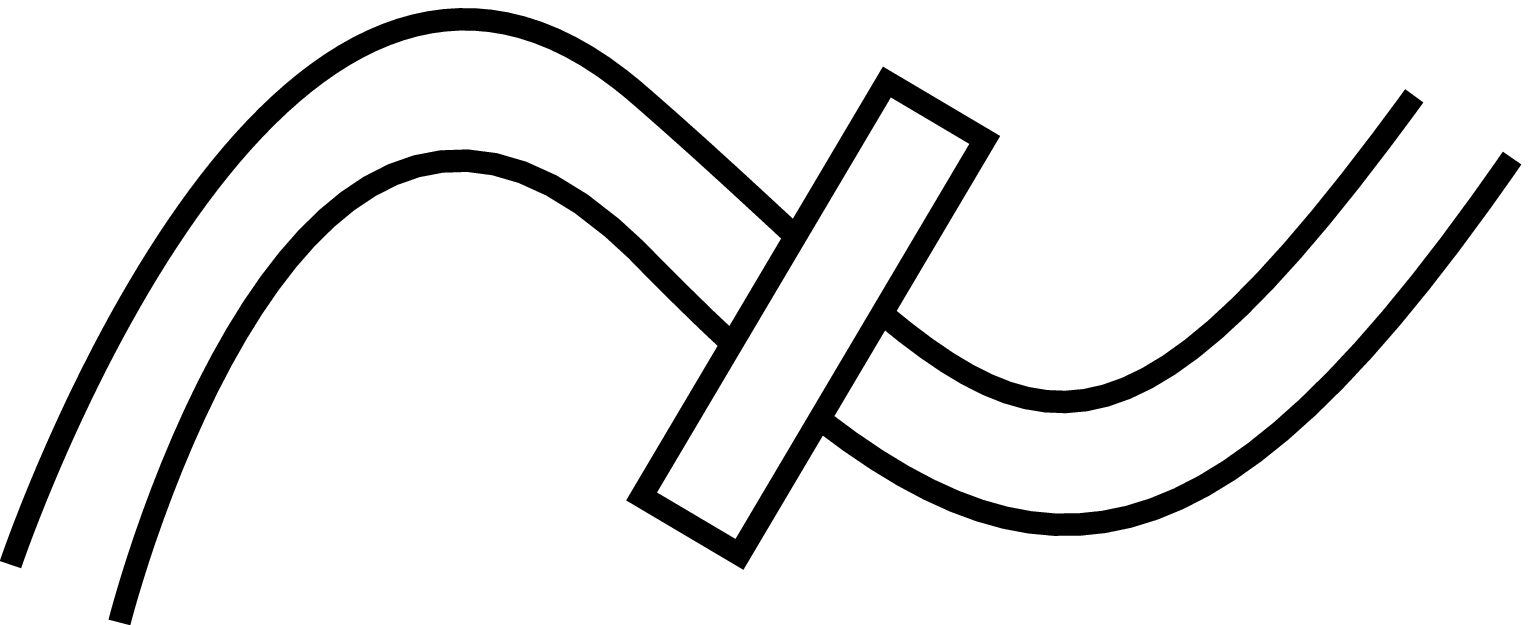}}
   \end{minipage}$     
  
\end{enumerate}	

In \cite{bataineh2016colored} we gave a natural  generalization for $[.]_2$ as follows.
\begin{definition}
\label{main definition}
Let $L$ be a singular link. For an integer $n \geq 1 $, the rational function $[L]_{2n}$ can be defined by the following rules:  

\begin{enumerate}
\item

$   \left[
   \begin{minipage}[h]{0.08\linewidth}
        \vspace{0pt}
        \scalebox{0.15}{\includegraphics{pos_crossing}}
   \end{minipage}
  \right]_{2n}  = \begin{minipage}[h]{0.08\linewidth}
        \vspace{0pt}
        \scalebox{0.08}{\includegraphics{colored_corssing_2}}
        \tiny{
        \put(-1,45){$2n$}
        \put(-48,45){$2n$}}
   \end{minipage}$
 \item
\vspace{3pt}  
 $ \left[
   \begin{minipage}[h]{0.08\linewidth}
        \vspace{0pt}
        \scalebox{0.08}{\includegraphics{singular}}
   \end{minipage}
  \right]_{2n}=\begin{minipage}[h]{0.08\linewidth}
        \vspace{0pt}
        \scalebox{0.08}{\includegraphics{singular_map}}
        \tiny{
        \put(-28,40){$n$}
        \put(-28,5){$n$}
        \put(-12,20){$n$}
        \put(-40,20){$n$}
        \put(-1,45){$2n$}
        \put(-48,45){$2n$}
        \put(-1,-3){$2n$}
        \put(-48,-3){$2n$}}
   \end{minipage}$ 
 \item
\vspace{8pt}  
 $ \left[
   \begin{minipage}[h]{0.08\linewidth}
        \vspace{0pt}
        \scalebox{0.08}{\includegraphics{strand}}
   \end{minipage}
  \right]_{2n}=\begin{minipage}[h]{0.08\linewidth}
        \vspace{0pt}
        \scalebox{0.1}{\includegraphics{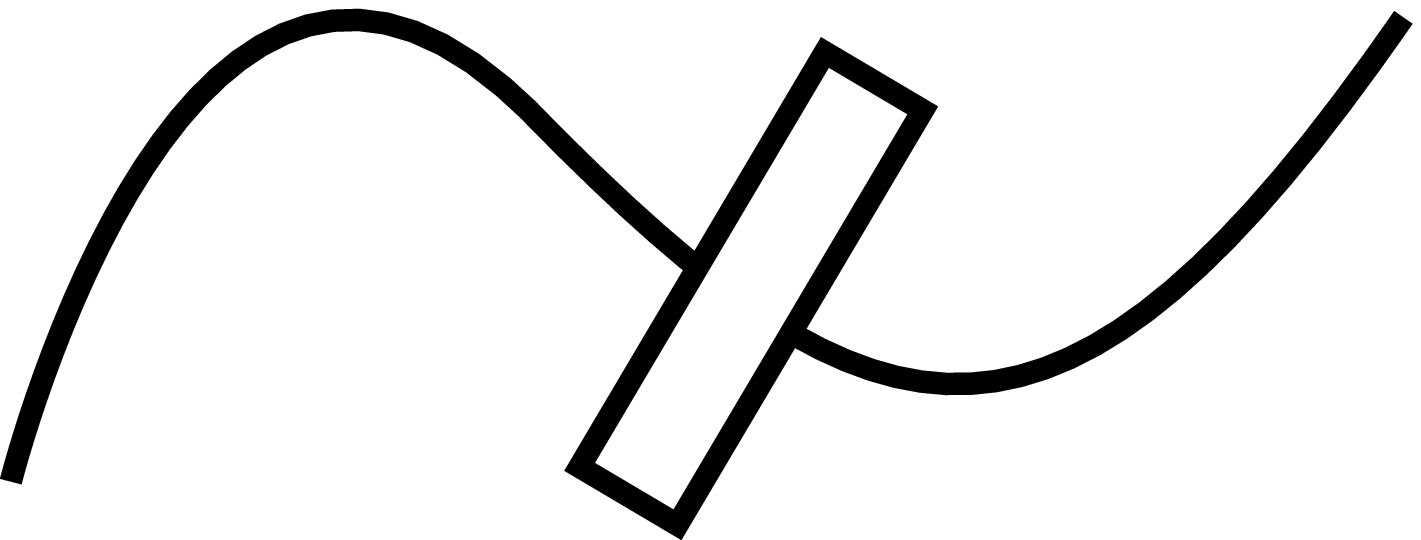}}
        \tiny{ \put(-3,5){$2n$}}
   \end{minipage}$     
  
\end{enumerate}		
\end{definition}
The proof that the previous three relations gives an invariant for singular links in $S^2$ can be done by showing that $[.]_{2n}$ is invariant under the singular Reidemeister moves. The details of this proof can be found in \cite{EH}.
It is clear that the invariant $[.]_{2n}$ can be viewed as an extension of the unreduced colored Jones polynomial for links in $S^3$. Namely, for a zero-framed knot $K$ in $S^3$ we have $\tilde{J}_{2n,K}=[K]_{2n}$. For this reason, we will denote this invariant by the most common notation of the unreduced colored Jones polynomial, that is $\tilde{J}_{2n}$. On the other hand, when computing the tail of the colored Jones polynomial of singular links we prefer to work with the normalized version of the colored Jones polynomial. We define the normalized colored Jones polynomial of a singular link $K$ by: $$J_{2n+1,K}(q)=\frac{1}{\Delta_{2n}}[K]_{2n} \bigg|_{A=q^
 	{1/4}}. $$ 
    This definition can be seen as an extension of the definition of the normalized colored Jones polynomial from the classical links. 
    
\subsection{Computing the Colored Jones Polynomial of Singular Links}

In \cite{MV} Masbum and Vogel gave an algorithm to compute the colored Jones polynomial using colored trivalent graphs. We review their algorithm and we show how it can be extended to compute the colored Jones polynomial of singular knots. We recall first some identities and definitions from  \cite{MV}.

Consider the skein module of $I \times I$ with $a+b+c$ specified points on the boundary. Partition the set
of the $a+b+c$ points on the boundary of the disk into $3$ sets of  $a$, $b$ and  $c$  
points respectively and at each cluster of points we place an appropriate idempotent, i.e. the one whose color matches the cardinality of this cluster. The skein module constructed this way will be denoted by $T_{a,b,c}.$ The skein module $T_{a,b,c}$ is either zero dimensional or one dimensional. The skein module $T_{a,b,c}$ is one dimensional if and only if the element shown in Figure \ref{taw} exists. For this element to exist it is necessary to find
non-negative integers $x,y$ and $z$ such that $a=x+y$, $b=x+z$ and $c=y+z$. 
\begin{figure}[H]
  \centering
   {\includegraphics[scale=0.2]{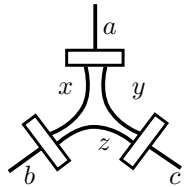}
   \small{
    \put(-60,-4){$b$}
         \put(-30,53){$a$}
          \put(-47,30){$x$}
          \put(-19,30){$y$}
          \put(-32,9){$z$}
         \put(-5,-4){$c$}
         }
     \caption{The skein element $\tau_{a,b,c}$ in the space $T_{a,b,c\text{ }}$ }
  \label{taw}}
\end{figure}
The following definition characterizes the existence of this skein element in terms of the integers $a$, $b$ and $c$. 
\begin{definition}
\label{admi}
A triple of non-negative integers $(a,b,c)$ is \textit{admissible} if $a+b+c$ is even and $a+b\geq
c\geq |a-b|.$
\end{definition}

When the triple $(a,b,c)$ is admissible, one can write  
$x=\frac{a+b-c}{2}$,
$y=\frac{a+c-b}{2}$, and 
$z=\frac{b+c-a}{2}$.  In this case we will denote the skein element that generates the space by $\tau_{a,b,c}$. We will call the triple $(a,b,c)$ the \textit{ interior colors} of $\tau_{a,b,c}$ and the triple $(x,y,z)$ the \textit{interior colors} of $\tau_{a,b,c}$. Note that when the triple $(a,b,c)$ is not admissible then the space $T_{a,b,c}$ is zero dimensional.

The fact that the inside colors are determined by the outside colors allows us to replace $\tau _{a,b,c}$ by a trivalent graph as follows:
\begin{figure}[H]

  \centering
   {\includegraphics[scale=0.29]{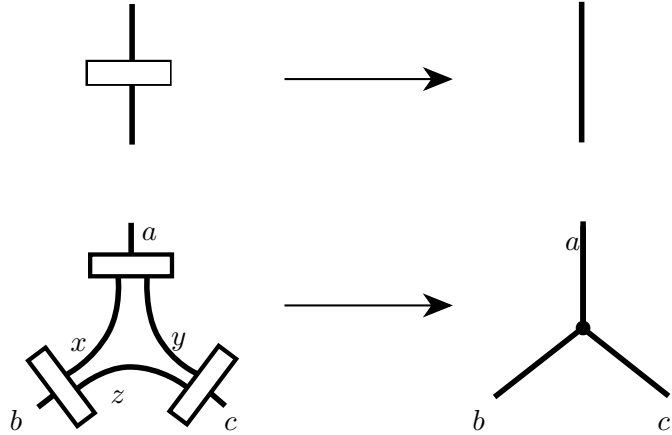}
    \put(-75,0){$b$}
         \put(-40,68){$a$}
          \put(-227,30){$x$}
          \put(-189,32){$y$}
          \put(-212,10){$z$}
         \put(-5,0){$c$}
            \put(-250,0){$b$}
         \put(-200,72){$a$}
               \put(-169,0){$c$}
     \caption{The skein element and its corresponding  trivalent vertex}
  \label{taw1}}
\end{figure}
 A \textit{colored trivalent graph} in $S^3$ is an embedded trivalent graph in $S^3$ with edges labeled by non-negative integers. One usually uses the word \textit{color} to refer to a label of the edge of a trivalent graph. A colored trivalent graph is called \textit{admissible} if the three edges meeting at a vertex satisfy the admissibility condition of the definition \ref{admi}. If $D$ is an admissible colored trivalent graph then the Kauffman bracket evaluation of $D$ is defined to be the evaluation of $D$ as an element in $\mathcal{S}(S^{2})$ after replacing each edge colored $n$ by the projector $f^{(n)}$ and each admissible vertex colored $(a,b,c)$ by the skein element $\tau_{a,b,c}$, as in Figure \ref{taw1}. If a colored trivalent graph has a non-admissible vertex then we will consider its evaluation in $\mathcal{S}(S^{2})$ to be zero.

 We will need the evaluation of the following important colored trivalent graphs shown in Figure \ref{graphs}.

   \begin{figure}[H]
  \centering
   {\includegraphics[scale=0.13]{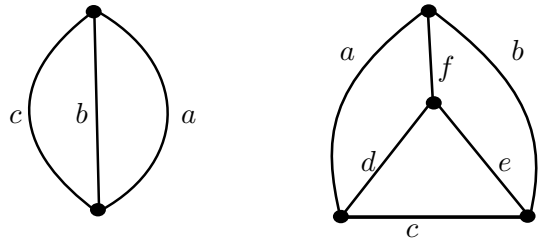}
    \put(-75,+62){$a$}
          \put(-10,+62){$b$}
          \put(-50,-5){$c$}
          \put(-67,20){$d$}
          \put(-15,20){$e$}
          \put(-38,56){$f$}
          \put(-135,38){$a$}
          \put(-175,38){$b$}
          \put(-200,38){$c$}
            \caption{The theta graph on the left and the tetrahedron graph on the right.}\label{graphs}}
\end{figure}
For an admissible triple $(a,b,c)$, an explicit formula for the \textit{theta coefficient}, denoted $\theta(a,b,c)$, was computed in \cite{MV} and is given by:
\begin{equation}
  \begin{minipage}[h]{0.1\linewidth}
       \scalebox{0.10}{\includegraphics{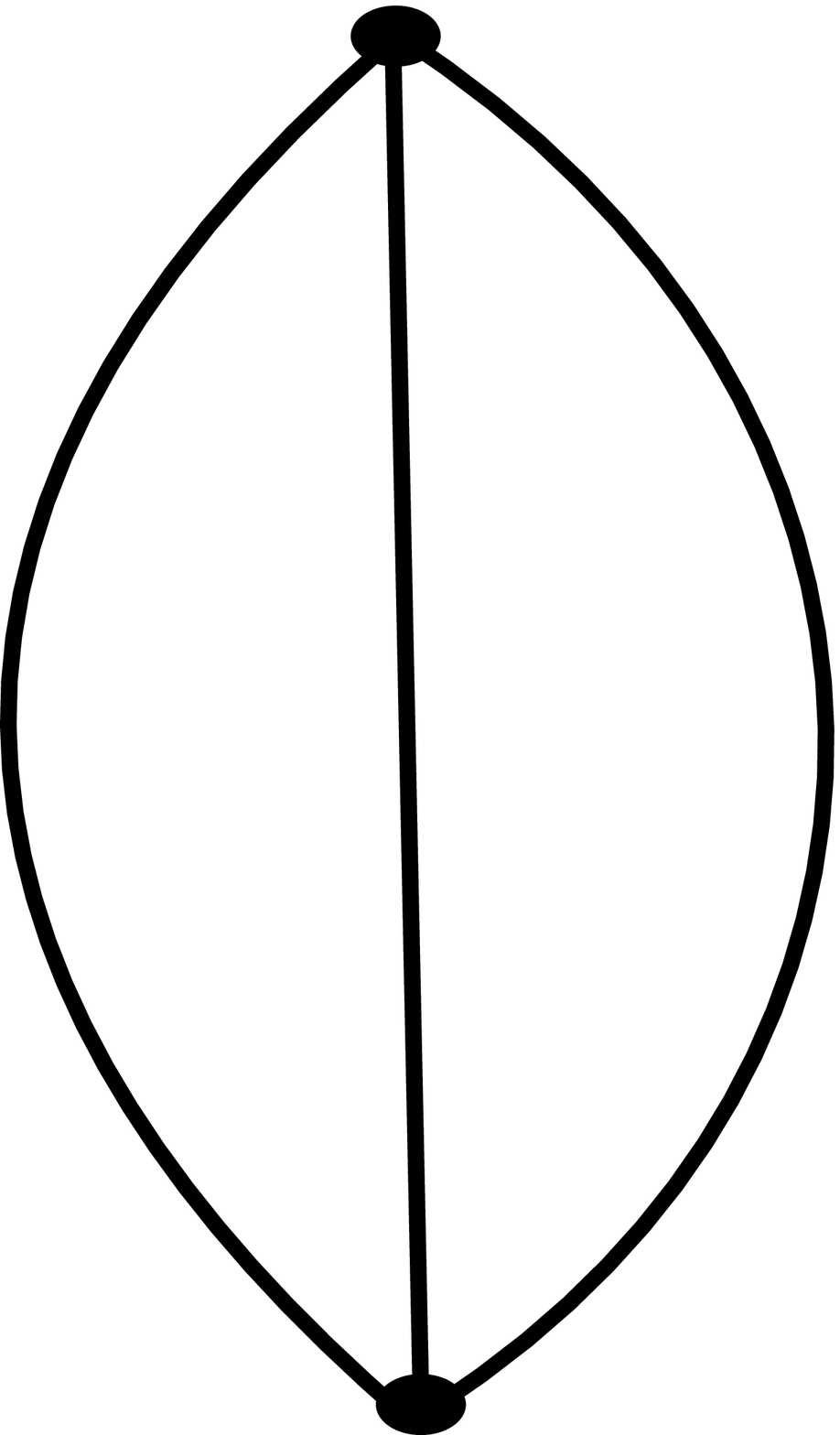}}
       \put(-42,25){$a$}
       \put(-24,25){$b$}
       \put(-10,25){$c$}
   \end{minipage}
   =(-1)^{x+y+z}\frac{[x+y+z+1]![x]![z]![y]!}{[x+y]![x+z]![y+z]!}
   \end{equation}
where $x,y$ and $z$ are the interior colors of the vertex $(a,b,c)$. In terms of the Pochhammer symbol the previous identity is given by 

\begin{equation}
\label{pac}
\theta(a,b,c)=(-1)^{x+y+z} q^{-(x+y+z)/2}  
 \frac{(q;q)_{x}(q;q)_{y}(q;q)_{z} (q ;q)_{x+y+z+1}}{ (1-q)(q;q)_{x+y}(q;q)_{y+z}(q;q)_{x+z}},
\end{equation}
where

\begin{equation}
(q;q)_n=\prod_{i=0}^{n-1}(1-q^{i+1}).
\end{equation}
The \textit{tetrahedron coefficient} is defined to be the evaluation of the graph appearing on the right handside of Figure 
\ref{graphs} 
and a formula of it can be found in \cite{MV}. The tetrahedron graph in Figure \ref{graphs} is denoted by $Tet\left[ 
\begin{array}{ccc}
a & d & e \\ 
f & c & b%
\end{array}%
\right]$. Besides the previous two coefficients the following two identities hold in $T_{a,b,c}$:

\begin{eqnarray}
\label{firsty}
    \begin{minipage}[h]{0.21\linewidth}
         \vspace{-0pt}
         \scalebox{0.35}{\includegraphics{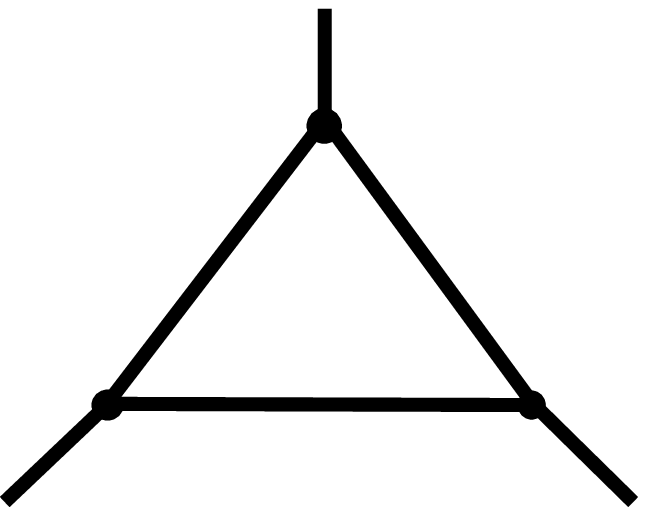}}
     \put(-69,-10){$b$}
          \put(-1,-10){$c$}
         \put(-26,50){$a$}
         \put(-59,25){$d$}
         \put(-16,25){$e$}
          \put(-36,0){$f$}
                    \end{minipage}&=&\frac{Tet\left[ 
\begin{array}{ccc}
a & d & e \\ 
f & c & b%
\end{array}%
\right]}{\theta(a,b,c)}
   \begin{minipage}[h]{0.16\linewidth}
        \vspace{-0pt}
        \scalebox{0.25}{\includegraphics{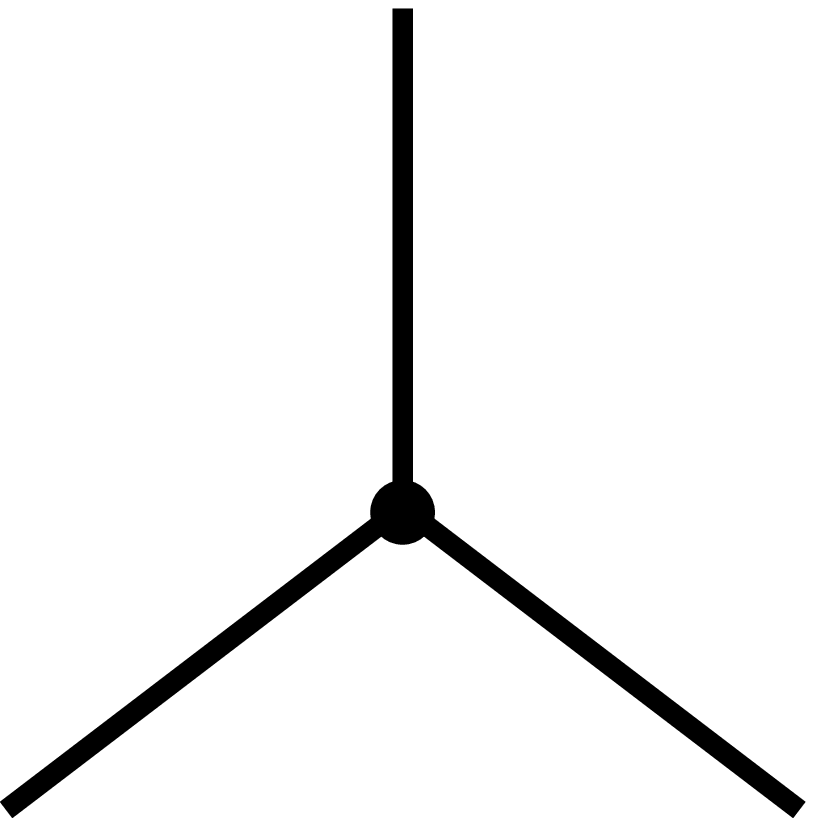}}
         \put(-60,-10){$b$}
          \put(-1,-10){$c$}
         \put(-26,50){$a$}
           \end{minipage}
  \end{eqnarray}

and

 \begin{eqnarray} 
 \label{trivalent identity}
   \begin{minipage}[h]{0.1\linewidth}
        \vspace{0pt}
        \scalebox{0.15}{\includegraphics{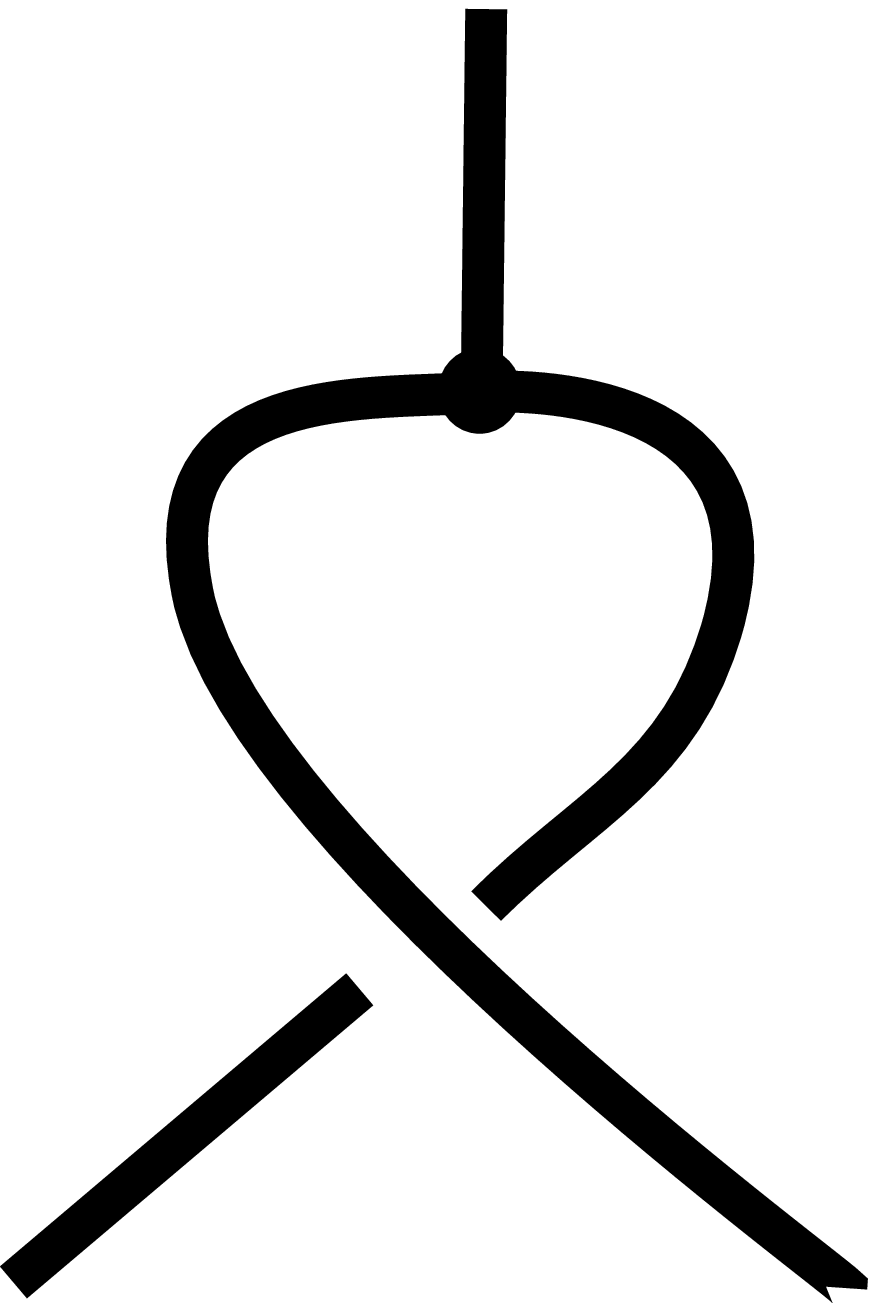}}
         \put(-45,-6){$b$}
          \put(-1,-6){$c$}
         \put(-26,50){$a$}

   \end{minipage} &=& \lambda^a_{b,c} \quad
   \begin{minipage}[h]{0.16\linewidth}
        \vspace{-0pt}
        \scalebox{0.25}{\includegraphics{3-valent_graph}}
         \put(-60,-10){$b$}
          \put(-1,-10){$c$}
         \put(-26,50){$a$}
           \end{minipage},
   \end{eqnarray}
   
where $\lambda^a_{b,c}=(-1)^{(a+b-c)/2}A^{(a^\prime+b^\prime-c^\prime)/2}$, and $x^{\prime}=x(x+2)$.

Define the space $T_{a,b}$ similar to the skein module $T_{a,b,c}$. Namely, this module is the submodule of the skein module of the disk with $a+b$ marked point on the boundary and place the idempotents $f^{a}$ and $f^{b}$ on the appropriate sets of points as we did for $T_{a,b,c}$. This module is also zero dimensional or one dimensional. Using the properties of the idempotent one can see that this space is one dimensional if and only if $a=b$ and zero dimensional otherwise.  In $T_{a,b}$ the following identity also holds:

\begin{eqnarray}
\label{bubble_1}
    \begin{minipage}[h]{0.09\linewidth}
         \vspace{-0pt}
         \scalebox{0.27}{\includegraphics{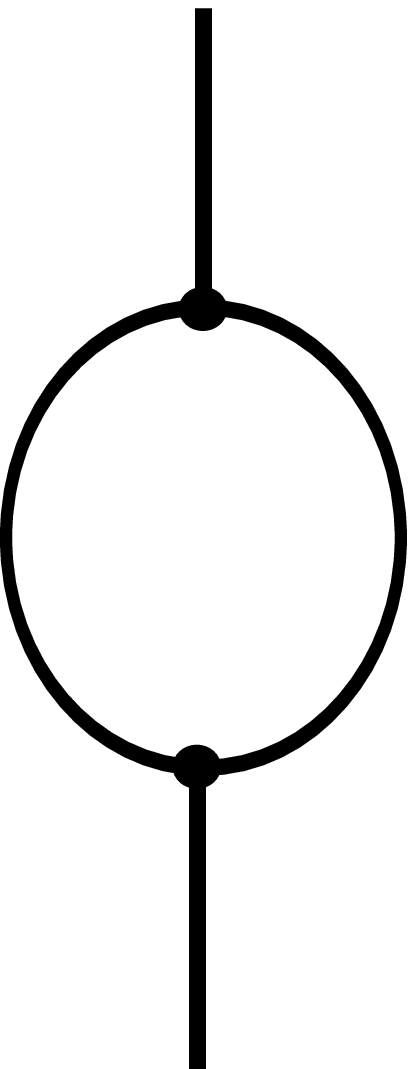}}
         \put(-10,0){$d$}
         \put(-10,80){$a$}
         \put(-40,45){$b$}
         \put(2,45){$c$}
              \end{minipage}&=&\delta_{a}^{d} \frac{\theta(a,b,c)}{\Delta_a}\quad 
   \begin{minipage}[h]{0.16\linewidth}
        \vspace{-0pt}
        \scalebox{0.1}{\includegraphics{lonelystrand}}
          \put(3,0){$a$}
           \end{minipage}
  \end{eqnarray}

We define the module of the disk $\mathscr{D}^{a,b}_{c,d}$ similar to the modules $T_{a,b,c}$ and $T_{a,b}$. See Figure \ref{disk} for an illustration.
   
   \begin{figure}[H]
  \centering
   {\includegraphics[scale=0.13]{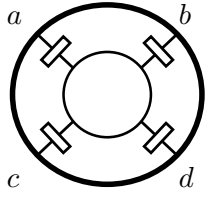}
    \put(-75,+62){$a$}
          \put(-10,+62){$b$}
          \put(-75,0){$c$}
          \put(-10,0){$d$}
            \caption{The relative skein module $\mathscr{D}^{a,b}_{c,d}$}
  \label{disk}}
\end{figure}
Now we define the bilinear form $<,>: \mathscr{D}^{a,b}_{c,d} \times \mathscr{D}^{a,b}_{c,d} \longrightarrow \mathcal{S}(S^2)$ as follows. Let $E$ and $F$ be two diagrams in $\mathscr{D}^{a,b}_{c,d}$. The diagram $<E,F>$ is an element in $\mathcal{S}(S^2)$ defined in Figure \ref{bilinear}.
   \begin{figure}[H]
  \centering
   {\includegraphics[scale=0.14]{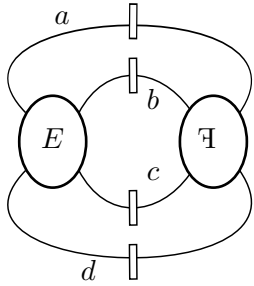}
    \put(-75,+97){$a$}
          \put(-40,+65){$b$}
          \put(-65,0){$d$}
          \put(-40,38){$c$}
          \put(-21,50){$\reflectbox{F}$}
          \put(-80,50){$E$}
            \caption{The diagram $<E,F>$ in $\mathcal{S} (S^2)$. }
  \label{bilinear}}
\end{figure}
Let $B_H=\{\mathcal{T}_i|(a,c,i),(b,d,i) \in ADM\}$ be the set of tangles defined in Figure \ref{basees1} (a). It is known that this set forms an orthogonal basis for the space $\mathscr{D}^{a,b}_{c,d}$ with respect to the bilinear form $<,>$. For more detail see \cite{Lic92}.
\begin{figure}[H]
\centering
\begin{minipage}{.5\textwidth}
  \centering
  \includegraphics[width=.25\linewidth]{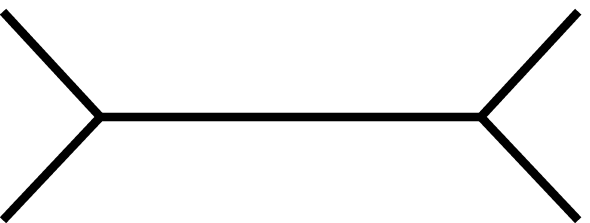}
  \put(-29,+17){\footnotesize{$i$}}
        \put(-69,+26){\footnotesize{$a$}}
        \put(-69,-1){\footnotesize{$c$}}
        \put(2,-1){\footnotesize{$d$}}
        \put(2,+26){\footnotesize{$b$}}
         \put(-32,-32){(a)}
\end{minipage}%
\begin{minipage}{.5\textwidth}
  \centering
  \includegraphics[width=.1\linewidth]{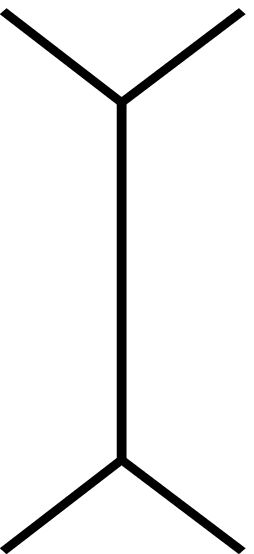}
  \put(-19,+23){\footnotesize{$i$}}
        \put(-29,+46){\footnotesize{$a$}}
        \put(-29,-1){\footnotesize{$c$}}
        \put(2,-1){\footnotesize{$d$}}
        \put(2,+46){\footnotesize{$b$}}
   \put(-15,-15){(b)}
\end{minipage}
  \caption{(a)The element $\mathcal{T}_i$ in the $B_H$ (b)The element $\mathcal{T}^{\prime}_i$ in the $B_V$ }
  \label{basees1}
\end{figure}
 By symmetry, the set $B_V=\{\mathcal{T}^{\prime}_i|(a,b,i),(c,d,i) \in ADM\}$ is also a basis. The change of basis between these two bases $B_H$ and $B_V$ is given by:

 \begin{eqnarray}
 \label{change of basis}
    \begin{minipage}[h]{0.09\linewidth}
        \vspace{0pt}
        \scalebox{0.3}{\includegraphics{VB}}
         \put(-29,+46){\footnotesize{$a$}}
        \put(-29,-1){\footnotesize{$c$}}
        \put(2,-1){\footnotesize{$d$}}
        \put(2,+46){\footnotesize{$b$}}
        \put(-5,+23){\footnotesize{$j$}}
   \end{minipage}
   &=&\displaystyle\sum\limits_{i}\left\{ 
\begin{array}{ccc}
a & b & i \\ 
c & d & j%
\end{array}%
\right\}\hspace{1 mm}  
    \begin{minipage}[h]{0.09\linewidth}
        \vspace{0pt}
        \scalebox{0.36}{\includegraphics{horizantal_basis}}
             \put(-29,+17){\footnotesize{$i$}}
        \put(-60,+26){\footnotesize{$a$}}
        \put(-60,-6){\footnotesize{$c$}}
        \put(1,-6){\footnotesize{$d$}}
        \put(1,+26){\footnotesize{$b$}}
   \end{minipage}
   \end{eqnarray}

The previous identity is also called the \textit{recoupling identity}. The coefficient $\left\{ 
\begin{array}{ccc}
a & b & i \\ 
c & d & j%
\end{array}%
\right\}$ is usually called the $6j-$ symbol. The \textit{fusion identity} is given by:

 \begin{eqnarray}
 \label{fusion}
    \begin{minipage}[h]{0.09\linewidth}
        \vspace{0pt}
        \scalebox{0.4}{\includegraphics{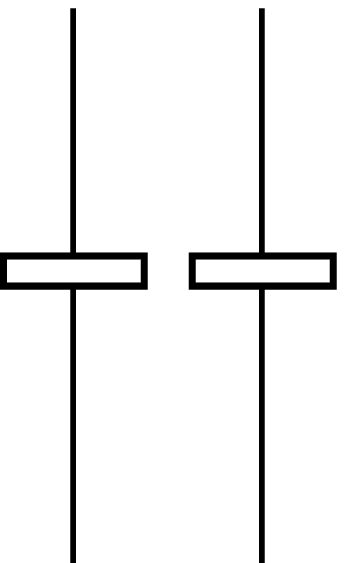}}
          \put(-5,+50){\footnotesize{$b$}}
        \put(-40,+50){\footnotesize{$a$}}
   \end{minipage}
   =\displaystyle\sum\limits_{i}\frac{\Delta_{a+b}}{\theta(a,b,i)}\hspace{1 mm}  
   \begin{minipage}[h]{0.09\linewidth}
        \vspace{0pt}
        \scalebox{0.3}{\includegraphics{VB}}
         \put(-29,+46){\footnotesize{$a$}}
        \put(-29,-1){\footnotesize{$a$}}
        \put(2,-1){\footnotesize{$b$}}
        \put(2,+46){\footnotesize{$b$}}
        \put(-5,+23){\footnotesize{$i$}}
   \end{minipage}
   \end{eqnarray}
 The fusion identity (\ref{fusion}) and identity (\ref{trivalent identity}) can be used to obtain the \textit{crossing fusion identity}:  

\begin{eqnarray}
\label{CFI}
\begin{minipage}[h]{0.08\linewidth}
        \vspace{0pt}
        \scalebox{0.08}{\includegraphics{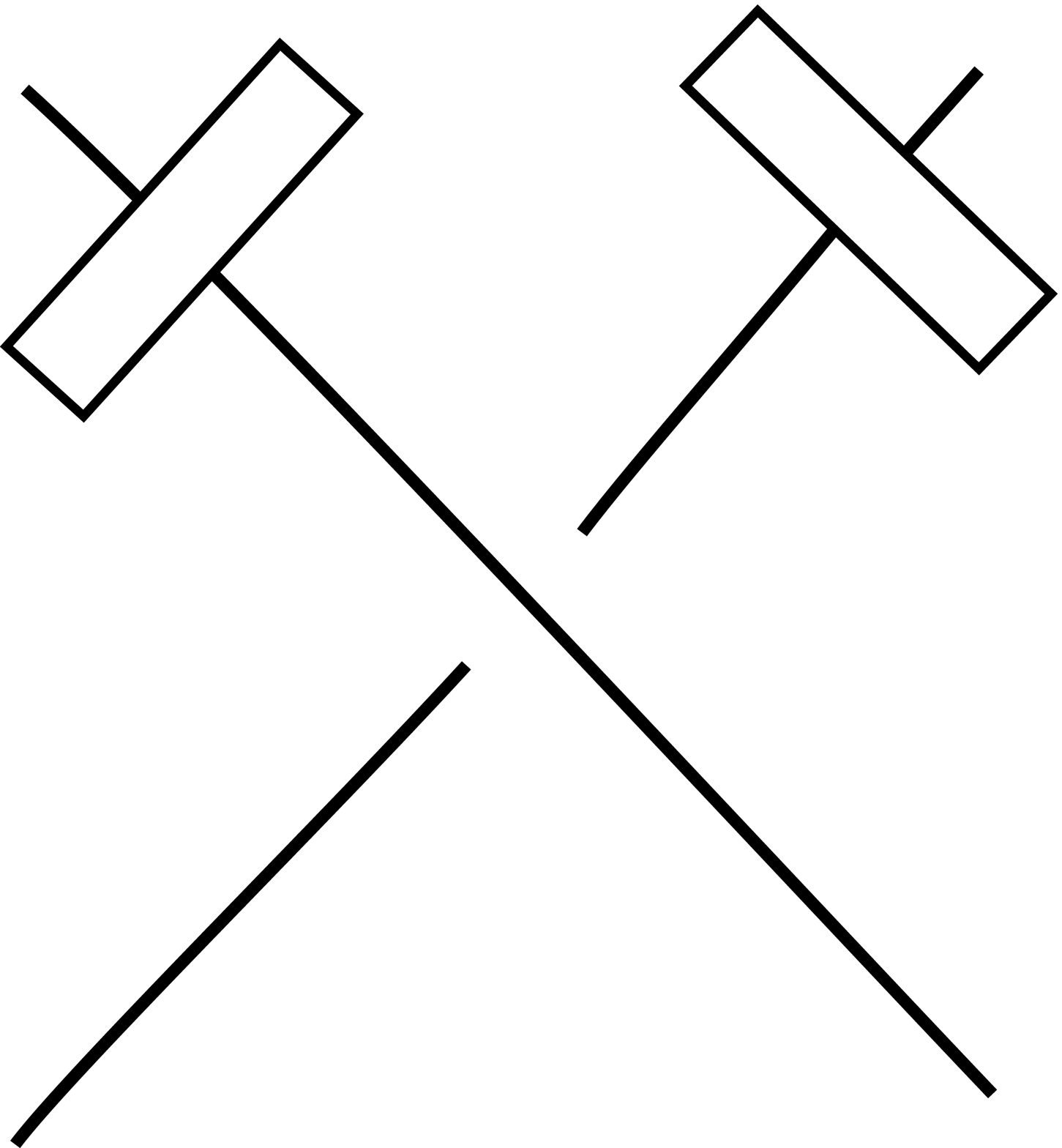}}
        \small{
        \put(-1,45){$b$}
        \put(-48,45){$a$}}
   \end{minipage} \quad =\displaystyle\sum\limits_{i} \frac{\Delta_{a+b}}{\theta(a,b,i)}\lambda^i_{a,b} \hspace{1 mm}  
   \begin{minipage}[h]{0.09\linewidth}
        \vspace{0pt}
        \scalebox{0.3}{\includegraphics{VB}}
         \put(-29,+46){\footnotesize{$a$}}
        \put(-29,-1){\footnotesize{$a$}}
        \put(2,-1){\footnotesize{$b$}}
        \put(2,+46){\footnotesize{$b$}}
        \put(-5,+23){\footnotesize{$i$}}
   \end{minipage}
 \end{eqnarray}

Let $L$ be a singular link in $S^2$. We want to compute the value of  $[L]_{2n}=\tilde{J}_{2n,L}$. Now $\tilde{J}_{2n,L}$ is a skein element in the skein module $\mathcal{S}(S^2)$ obtained by replacing every crossing by the right hand-side of rule (1) in Definition \ref{main definition} and every singular crossing by the skein element on the right hand side of rule (2) in Definition \ref{main definition}. In order to evaluate the skein element  $\tilde{J}_{2n,L}$ we show how it can be realized as a linear combination of colored trivalent graphs in $\mathcal{S}(S^2)$. The evaluation of any colored trivalent graph in $\mathcal{S}(S^2)$ can then by calculated using the algorithm given in \cite{MV}. This gives us a method of computing the evaluation of $\tilde{J}_{2n,L}$ for any singular link $L$.

The skein element $\tilde{J}_{2n,L}$ can be realized as a $\mathbb{Q}(A)$-linear combination of colored trivalent graphs in $\mathcal{S}(S^2)$ as follows:
\begin{enumerate}
\item Use the crossing fusion identity to change every crossing to a $\mathbb{Q}(A)$-linear combination of trivalent graphs as in equation (\ref{CFI}). 
\item Now we know that the singular crossings are replaced by the skein element $(2)$ in Definition \ref{main definition}. We notice that this skein element can be realized as a trivalent graph as follows:
 \begin{eqnarray*}
   \begin{minipage}[h]{0.14\linewidth}
        \vspace{0pt}
        \scalebox{0.22}{\includegraphics{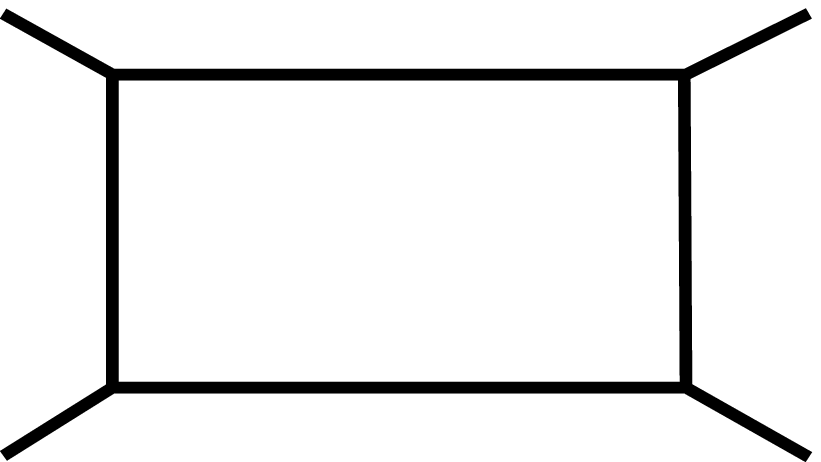}}
             \put(-29,+28){\footnotesize{$n$}}
             \put(-29,-5){\footnotesize{$n$}}
        \put(-60,+26){\footnotesize{$2n$}}
        \put(-60,-6){\footnotesize{$2n$}}
        \put(1,-6){\footnotesize{$2n$}}
        \put(1,+26){\footnotesize{$2n$}}
   \end{minipage}= 
\hspace{7 mm}  
    \begin{minipage}[h]{0.08\linewidth}
        \vspace{0pt}
        \scalebox{0.08}{\includegraphics{singular_map}}
        \tiny{
        \put(-28,40){$n$}
        \put(-28,5){$n$}
        \put(-12,20){$n$}
        \put(-40,20){$n$}
        \put(-1,45){$2n$}
        \put(-48,45){$2n$}
        \put(-1,-3){$2n$}
        \put(-48,-3){$2n$}}
   \end{minipage}
   \end{eqnarray*}
\end{enumerate}
 The evaluation of any trivalent graph in $\mathcal{S}^2$ can be calculated by using an algorithm that utilizes the recoupling formula and identities (\ref{firsty}) and (\ref{bubble_1}). The details of this algorithm can be found in \cite{MV}. We give an example to illustrate how the invariant $\tilde{J}_{{2n},L}$ can be computed for a singular link $L$ using this method.

\begin{example}
To illustrate how the invariant $\tilde{J}_{2n,L}$ can be computed in practice we compute the example given in Figure \ref{Singular Torus knot}. We denote this singular knot by $ST(k,l)$. When $l=0$ we will denote this knot simply by $ST_k$.
\begin{figure}[H]
   {\includegraphics[scale=0.08]{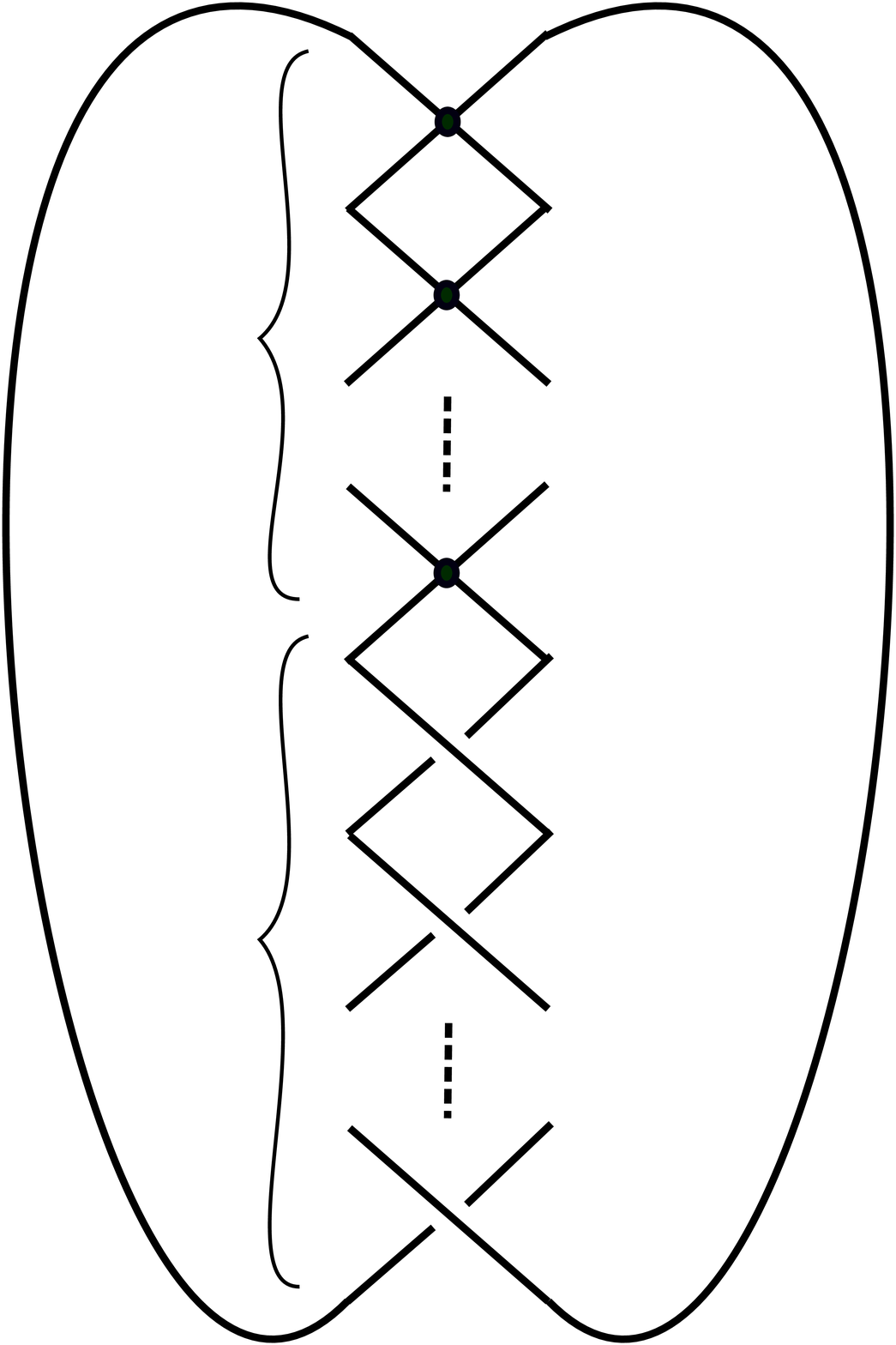}
   \put(-70,80){$k$}
   \put(-70,40){$l$}
     \caption{Singular Torus $ST(k,l)$.}
  \label{Singular Torus knot}} 
\end{figure}

To compute the value of this invariant we first notice that for positive integers $k$ and $n$ the following skien identity holds:

 \begin{eqnarray}  
  \left( \hspace{10pt}
   \begin{minipage}[h]{0.11\linewidth}
        \vspace{0pt}
        \scalebox{0.22}{\includegraphics{bubble_as_spin}}
             \put(-29,+30){\footnotesize{$n$}}
             \put(-29,-5){\footnotesize{$n$}}
        \put(-60,+29){\footnotesize{$2n$}}
        \put(-60,-6){\footnotesize{$2n$}}
        \put(1,-6){\footnotesize{$2n$}}
        \put(1,+26){\footnotesize{$2n$}}
   \end{minipage} \hspace{10pt} \right)^{\otimes k}  &=& \sum\limits_{i=0}^{n} R_{n,i}
\hspace{1 mm}  
    \begin{minipage}[h]{0.09\linewidth}
        \vspace{0pt}
        \scalebox{0.36}{\includegraphics{horizantal_basis}}
             \put(-29,+17){\footnotesize{$2i$}}
        \put(-60,+26){\footnotesize{$2n$}}
        \put(-60,-6){\footnotesize{$2n$}}
        \put(1,-6){\footnotesize{$2n$}}
        \put(1,+26){\footnotesize{$2n$}}
   \end{minipage}
   \label{small claim}
   \end{eqnarray}
where

\begin{equation}
R_{n,i}=\frac{\theta (2n,2n,2i)^{k-1}  }{\theta (n,n,2i)^{k}}\Delta_{2i}
\end{equation}

To prove identity (\ref{small claim}), we apply the fusion identity to obtain:
 \begin{eqnarray*}
   \begin{minipage}[h]{0.14\linewidth}
        \vspace{0pt}
        \scalebox{0.22}{\includegraphics{bubble_as_spin}}
             \put(-29,+28){\footnotesize{$n$}}
             \put(-29,-5){\footnotesize{$n$}}
        \put(-60,+26){\footnotesize{$2n$}}
        \put(-60,-6){\footnotesize{$2n$}}
        \put(1,-6){\footnotesize{$2n$}}
        \put(1,+26){\footnotesize{$2n$}}
   \end{minipage}   = \sum\limits_{i=0}^{n} \frac{\Delta{2i}}{\theta(n,n,2i)} 
\hspace{5 mm}  
    \begin{minipage}[h]{0.1\linewidth}
        \vspace{0pt}
        \scalebox{0.17}{\includegraphics{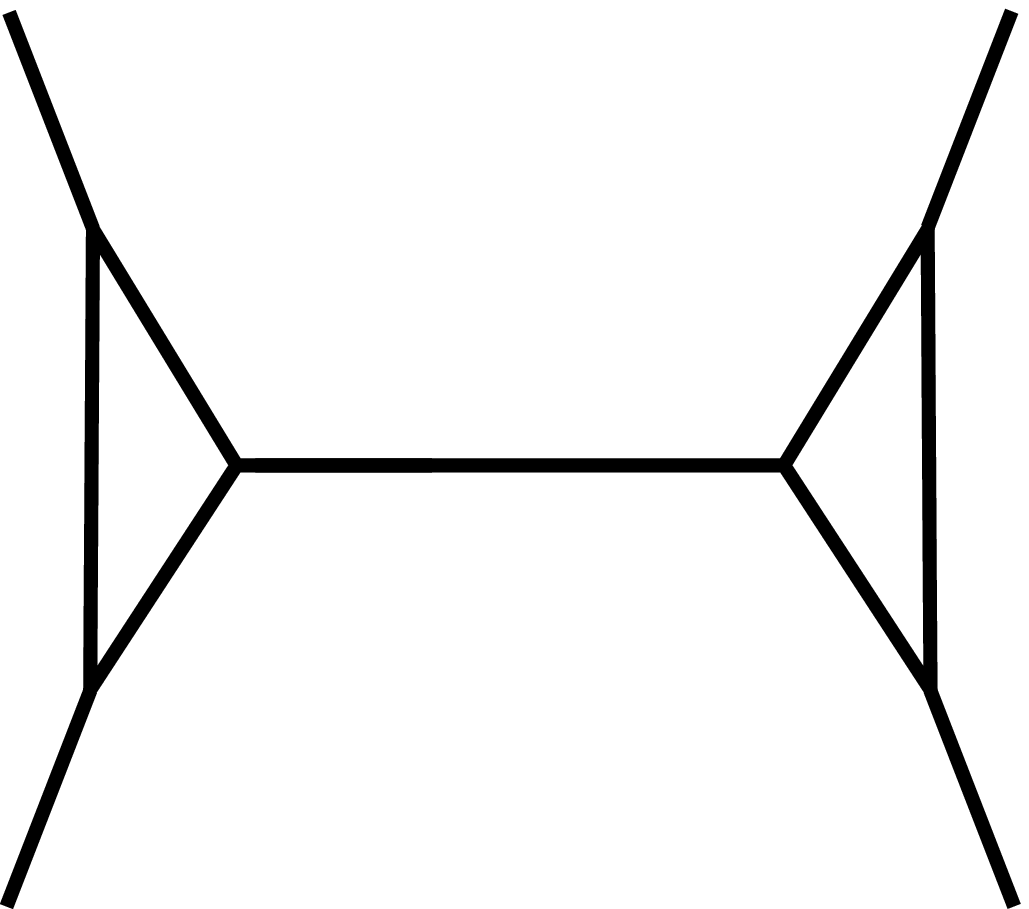}}
             \put(-29,+24){\footnotesize{$2i$}}
        \put(-60,+35){\footnotesize{$2n$}}
        \put(-60,-6){\footnotesize{$2n$}}
        \put(1,-6){\footnotesize{$2n$}}
        \put(1,+35){\footnotesize{$2n$}}
   \end{minipage}\hspace{9 mm}
   = \sum\limits_{i=0}^{n} B_{n,i} 
\hspace{1 mm}  
    \begin{minipage}[h]{0.09\linewidth}
        \vspace{0pt}
        \scalebox{0.36}{\includegraphics{horizantal_basis}}
             \put(-29,+17){\footnotesize{$2i$}}
        \put(-60,+26){\footnotesize{$2n$}}
        \put(-60,-6){\footnotesize{$2n$}}
        \put(1,-6){\footnotesize{$2n$}}
        \put(1,+26){\footnotesize{$2n$}}
   \end{minipage}
   \end{eqnarray*}
where

\begin{eqnarray}
B_{n,i}=\left(\frac{ Tet\left[ 
\begin{array}{ccc}
2i & n & n \\ 
n & 2n & 2n%
\end{array}%
\right] }{\theta (2n,2n,2i)}\right)^2\frac{\Delta_{2i}}{\theta (n,n,2i)}
\end{eqnarray}
But since,

\begin{eqnarray}
\label{2222212}
Tet\left[ 
\begin{array}{ccc}
2i & n & n \\ 
n & 2n & 2n%
\end{array}
\right] =\theta(2n,2n,2i).
\end{eqnarray}

One obtains:
\begin{equation*}
B_{n,i}=\frac{\Delta_{2i}}{\theta(n,n,2i)}
\end{equation*}
Furthermore,
\begin{eqnarray*}
    \begin{minipage}[h]{0.14\linewidth}
        \vspace{0pt}
        \scalebox{0.36}{\includegraphics{horizantal_basis}}
             \put(-29,+17){\footnotesize{$2i$}}
        \put(-60,+26){\footnotesize{$2n$}}
        \put(-60,-6){\footnotesize{$2n$}}
        \put(1,-6){\footnotesize{$2n$}}
        \put(1,+26){\footnotesize{$2n$}}
   \end{minipage}\otimes \hspace{5pt} \left( \hspace{10pt}
   \begin{minipage}[h]{0.11\linewidth}
        \vspace{0pt}
        \scalebox{0.22}{\includegraphics{bubble_as_spin}}
             \put(-29,+28){\footnotesize{$n$}}
             \put(-29,-5){\footnotesize{$n$}}
        \put(-60,+26){\footnotesize{$2n$}}
        \put(-60,-6){\footnotesize{$2n$}}
        \put(1,-6){\footnotesize{$2n$}}
        \put(1,+26){\footnotesize{$2n$}}
   \end{minipage} \hspace{10pt} \right)^{\otimes k}
   &=& (P_{n,i})^k
\hspace{1 mm}  
    \begin{minipage}[h]{0.09\linewidth}
        \vspace{0pt}
        \scalebox{0.36}{\includegraphics{horizantal_basis}}
             \put(-29,+17){\footnotesize{$2i$}}
        \put(-60,+26){\footnotesize{$2n$}}
        \put(-60,-6){\footnotesize{$2n$}}
        \put(1,-6){\footnotesize{$2n$}}
        \put(1,+26){\footnotesize{$2n$}}
   \end{minipage}
   \end{eqnarray*}
where 
\begin{eqnarray*}
P_{n,i}=\frac{  Tet\left[ 
\begin{array}{ccc}
2i & 2n & 2n \\ 
n & n & n%
\end{array}%
\right]  }{\theta (n,n,2i)}
\end{eqnarray*}
However,

\begin{eqnarray}
\label{22222}
Tet\left[ 
\begin{array}{ccc}
2i & n & n \\ 
n & 2n & 2n%
\end{array}
\right] =Tet\left[ 
\begin{array}{ccc}
2i & 2n & 2n \\ 
n & n & n%
\end{array}
\right]
\end{eqnarray}

Hence, both equations and (\ref{2222212}) and (\ref{22222}) imply:

\begin{eqnarray*}
P_{n,i}=\frac{ \theta(2n,2n,2i) }{\theta (n,n,2i)}
\end{eqnarray*}
Thus,

\begin{eqnarray*}
    \begin{minipage}[h]{0.34\linewidth}
        \vspace{0pt}
        \scalebox{0.36}{\includegraphics{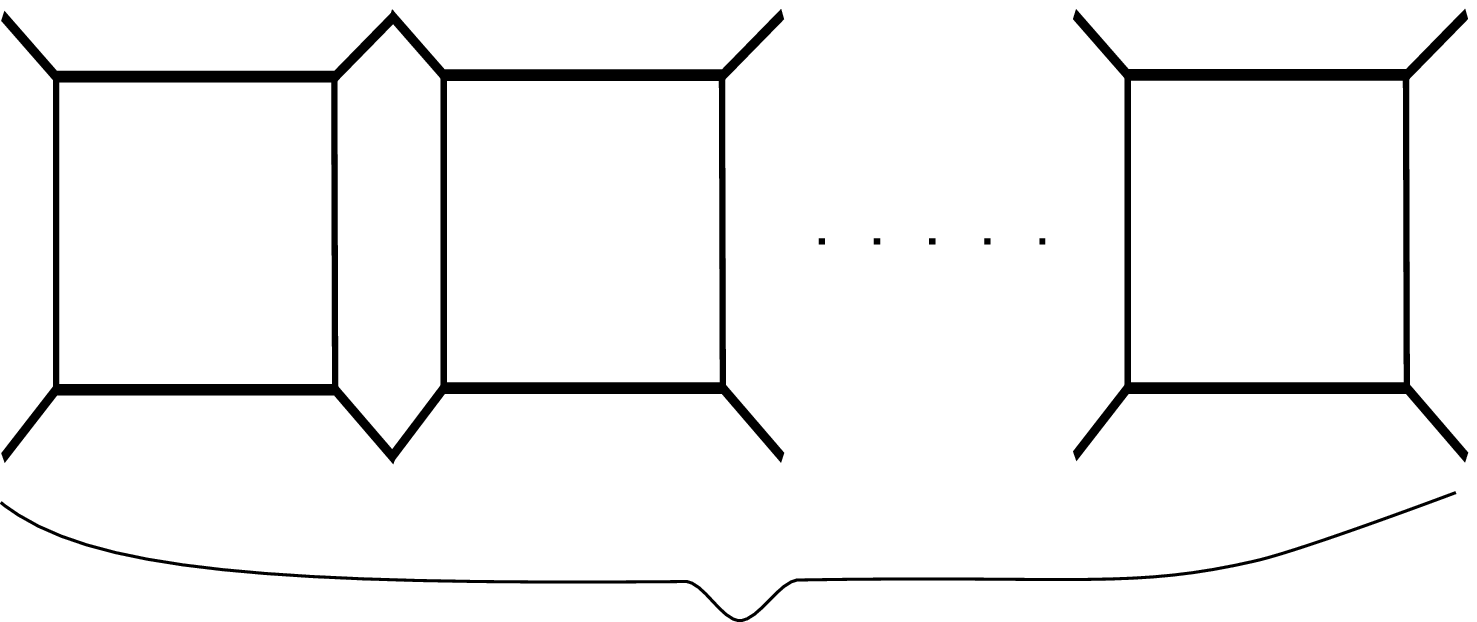}}
         \put(-29,+12){\footnotesize{$n$}}
        \put(-29,+60){\footnotesize{$n$}}
         \put(-160,64){\footnotesize{$2n$}}
        \put(-60,-6){\footnotesize{$k$ copies}}
        \put(-5,+64){\footnotesize{$2n$}}
        \put(-5,10){\footnotesize{$2n$}}
        \put(-160,+10){\footnotesize{$2n$}}
         \put(-139,+12){\footnotesize{$n$}}
        \put(-139,+60){\footnotesize{$n$}}
         \put(-99,+60){\footnotesize{$n$}}
        \put(-99,+12){\footnotesize{$n$}}
   \end{minipage}
   &=&\sum\limits_{i=0}^{n} B_{n,i}
\hspace{1 mm}  
   \begin{minipage}[h]{0.14\linewidth}
        \vspace{0pt}
        \scalebox{0.36}{\includegraphics{horizantal_basis}}
             \put(-29,+17){\footnotesize{$2i$}}
        \put(-60,+26){\footnotesize{$2n$}}
        \put(-60,-6){\footnotesize{$2n$}}
        \put(1,-6){\footnotesize{$2n$}}
        \put(1,+26){\footnotesize{$2n$}}
   \end{minipage}\otimes \hspace{5pt} \left( \hspace{10pt}
   \begin{minipage}[h]{0.11\linewidth}
        \vspace{0pt}
        \scalebox{0.22}{\includegraphics{bubble_as_spin}}
             \put(-29,+28){\footnotesize{$n$}}
             \put(-29,-5){\footnotesize{$n$}}
        \put(-60,+26){\footnotesize{$2n$}}
        \put(-60,-6){\footnotesize{$2n$}}
        \put(1,-6){\footnotesize{$2n$}}
        \put(1,+26){\footnotesize{$2n$}}
   \end{minipage} \hspace{10pt} \right)^{\otimes k-1}\\
   &=&\sum\limits_{i=0}^{n} B_{n,i} (P_{n,i})^{k-1}\hspace{1 mm}  
   \begin{minipage}[h]{0.14\linewidth}
        \vspace{0pt}
        \scalebox{0.36}{\includegraphics{horizantal_basis}}
             \put(-29,+17){\footnotesize{$2i$}}
        \put(-60,+26){\footnotesize{$2n$}}
        \put(-60,-6){\footnotesize{$2n$}}
        \put(1,-6){\footnotesize{$2n$}}
        \put(1,+26){\footnotesize{$2n$}}
   \end{minipage}.
   \end{eqnarray*}

Hence, \ref{small claim} follows. Thus colored Jones polynomial of $ST(k,l)$ is given by :

$$J_{2n+1,ST(k,l)}=\frac{1}{\Delta_{2n}} \sum_{i=0}^{n}\frac{\theta(2n,2n,2i)^{k}\Delta_{2i}}{\theta(n,n,2i)^{k}}(\lambda_{2i,2n})^l$$

\end{example}

\section{The tail of the Colored Jones Polynomial for Singular Knots}\label{sec5}
The Study of the properties of the tail of the colored Jones polynomial have attracted attention recently (see for instance \cite{Hajij1,Hajij2,lee2016trivial,robert1,milas}).  One of the main reasons for this is due to the fact that this tail have been proved to give rise to Ramanujan theta and false theta identities \cite{CodyOliver,Hajij2}.
In this section we start the investigation of the properties of the singular colored Jones polynomial and we compute the tail of the torus singular knot $ST_k$.  First, we briefly review the basics of the head and the tail of the colored Jones polynomial. For more details see \cite{Armond,CodyOliver,Hajij1, Hajij2}.
 
 \noindent
 If $P_1(q)$ and $P_2(q)$ are elements in $\mathbb{Z}[q^{-1}][[q]]$, we write $P_1(q)\doteq_n P_2(q)$ if their first $n$ coefficients agree up to a sign.  It was proven in \cite{CodyOliver} that the coefficients of the colored Jones polynomial of an alternating link $L$ stabilize in the following sense: For every $n\geq2$, we have $J_{n+1,L}(q)\doteq_n J_{n,L}(q)$. We give the following example to illustrate this further
\begin{example}
The colored Jones polynomial for the knot $6_2$, up to multiplication with a suitable power $q^{\pm a_n}$ for some integer $a_n$, is given in the following table:

\begin{center}
\vspace{0.5em}
\begin{small}
\begin{tabular}{ |c|c| } 

\hline

$n=2$ &\hspace{-4.29em} $1-2q+2q^{2}-2q^{3}+2q^{4}-q^{5}+q^{6}$  \\
$n=3$ & \hspace{-1.2em} $1-2q+4q^{3}-5q^{4}+6q^{6}-6q^{7}+6q^{9}+...$  \\ 
$n=4$ & \hspace{-1.7em} $1-2q+2q^{3}+q^{4}-4q^{5}-2q^{6}+7q^{7}+...$ \\ 
$n=5$ & \hspace{-1.35em}$1 - 2 q + 2 q^3 - q^4 + 2 q^5 - 6 q^6 + 2 q^7 +...$ \\ 
$n=6$ &  \hspace{-2.25em} $1-2q+2q^{3}-q^{4}-2q^{7}+q^{8}+5q^{9}+...$ \\ 
$n=7$ &  \hspace{1.6em}$1-2q+2q^{3}-q^{4}-2q^{6}+4q^{7}-3q^{8}+7q^{10}+...$ \\ 
$n=8$ &  \hspace{0.9em} $1-2q+2q^{3}-q^{4}-2q^{6}+2q^{7}+3q^{8}-4q^{9}+...$\\
\hline 
\end{tabular}
\end{small}
\end{center}
 This motivated the authors of \cite{CodyOliver} to define the tail of the colored Jones polynomial of a link. More precisely, define the $q$-series series associated with the colored Jones polynomial of an alternating link $L$ whose $n^{th}$ coefficient is the $n^{th}$ coefficient of $J_{n,L}(q)$. Stated differently, the tail of the colored Jones polynomial of a link $L$ is defined to be a series
 $T_L(q)$, that satisfies  $T_L(q)\doteq_{n}J_{n,L}(q)$ for all $n \geq 1$. Hence from the table above we deduce that the tail of the colored Jones polynomial of the knot $6_2$ is given by :
\begin{equation*}
T_{6_2}(q)=1-2q+ 0 q^{2} +2q^{3}-q^{4}+0q^{5}- 2q^{6}+2q^{7}+...
\end{equation*}
\end{example}
In the same way, the head of the colored Jones polynomial of a link $L$ is defined to be the tail of $J_{n,L}(q^{-1})$. In this paper we consider the tail of a sequence of invariants of singular knots. For this reason we define the tail of a sequence of power series in general.
\begin{definition}
Let $\mathcal{P} = \{P_n(q)\}_{n\in \mathbb{N}}$ be a sequence of formal power series in $\mathbb{Z}[q^{-1}][[	q]]$. The tail of the sequence $\mathcal{P}$- if it exists - is the formal power series $T_{\mathcal{P}}$ in $Z[[q]]$ that satisfies
$$T_{\mathcal{P}}(q)\doteq_n P_n(q)$$
\end{definition}
Next we prove and compute the tail of the colored Jones polynomial for the singular knot $ST_k$. This is the link shown in Figure \ref{Singular Torus knot} with only singular crossings.

\begin{theorem}
\label{first side}
For $k\geq 1$, we have
\begin{equation}
T_{ST_k}(q)= (q;q)_{\infty}^k \sum_{i=0}^\infty \frac{q^{i}}{(q;q)_i}
\end{equation}
\begin{proof}
The colored Jones polynomial of the singular knot $ST_k$ is given by:
\begin{equation}
\label{colored Jones}
J_{2n,ST_k}=\frac{1}{\Delta_{2n}}\sum_{i=0}^{n}\frac{\theta(2n,2n,2i)^{k}\Delta_{2i}}{\theta(n,n,2i)^{k}}.
\end{equation}

The theorem hence follows by proving that:

$$  J_{2n+1,ST_k}=_n  (q;q)_{\infty}^k \sum_{i=0}^\infty \frac{q^{i}}{(q;q)_i}.$$

First we note that equation \ref{pac} implies :
\begin{equation}
\label{special theta}
\frac{\theta(2n,2n,2i)}{\theta(n,n,2i)}=\frac{(-1)^n q^{\frac{-n}{2}} (q;q)_n^2 (q;q)_{2n-i} (q;q)_{2n+i+1} }{ (q;q)_{2n}^2 (q;q)_{n-i} (q;q)_{n+i+1} }
\end{equation}
On the other hand,

\begin{align}
\label{0}
  \frac{(q;q)_{n}} {(q;q)_{2n}}&=\frac{\displaystyle\prod_{k=0}^{n-1}(1-q^{k+1})}{\displaystyle\prod_{k=0}^{2n-1}(1-q^{k+1})} \nonumber \\
  &=\frac{1}{\displaystyle\prod_{k=n}^{2n-1}(1-q^{k+1})}\nonumber \\
  &=\displaystyle\prod_{k=0}^{n-1}\frac{1}{(1-q^{n+k+1})}\doteq_n1.
\end{align}

Moreover,

\begin{align}
\label{1}
\frac{(q;q)_{2n-i+1}}{(q;q)_{n+i+1}}
&=1-q^{n+i+2}+O(n+i+3)\doteq_{n}1.
\end{align}
and 

\begin{align}
\label{1}
\frac{(q;q)_{2n+i+1}}{(q;q)_{2n}}
&=1-q^{2n+1}+O(2n+2)\doteq_{n}1.
\end{align}

Hence equation \ref{colored Jones} becomes:

\begin{equation}
J_{2n+1,ST_k}\doteq_n \frac{(q;q)^k_n}{\Delta_n}\sum_{i=0}^n \frac{\Delta_{2i}}{(q;q)_{n-i}}=\frac{(q;q)^k_n}{\Delta_n}\sum_{i=0}^n \frac{\Delta_{2i}}{(q;q)_{i}} \doteq_n (q;q)^k_n\sum_{i=0}^n \frac{q^i}{(q;q)_{i}}
\end{equation}
The result follows.
\end{proof}
\end{theorem}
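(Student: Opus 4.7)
The plan is to start from the explicit formula
\[
J_{2n+1, ST_k}(q) \;=\; \frac{1}{\Delta_{2n}} \sum_{i=0}^{n} \frac{\theta(2n,2n,2i)^{k}\,\Delta_{2i}}{\theta(n,n,2i)^{k}}
\]
obtained in the preceding example by specializing $l=0$, and to show that the partial sum on the right agrees with $(q;q)_n^{k}\sum_{i=0}^{n} q^{i}/(q;q)_i$ modulo $q^{n+1}$, after which letting $n\to\infty$ will yield the claimed tail identity.

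First I would identify the interior colors of the two relevant trivalent vertices. Admissibility forces $(x,y,z)=(n-i,i,i)$ for $(n,n,2i)$ and $(x,y,z)=(2n-i,i,i)$ for $(2n,2n,2i)$. Substituting these into the Pochhammer form (\ref{pac}) and cancelling the shared factors $(q;q)_i^2$ and $(q;q)_{2i}$ gives a closed expression for $\theta(2n,2n,2i)/\theta(n,n,2i)$ as a product of $q$-Pochhammer symbols together with a unit prefactor $(-1)^n q^{-n/2}$. Raising to the $k$-th power and multiplying by $\Delta_{2i}/\Delta_{2n}$ (which, after $A=q^{1/4}$, carries the essential factor $q^i$ up to units in $\mathbb{Z}[[q]]$) produces the summand I wish to estimate.

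Next I would invoke the elementary asymptotic that $(q;q)_{n+m}/(q;q)_{n+m'}\doteq_n 1$ whenever $m,m'\ge 0$, since the extra factors $\prod(1-q^{n+j})$ deviate from $1$ only in degrees $\ge n+1$. Applied to each of the factors $(q;q)_{2n\pm j}$ and $(q;q)_{n+i+1}$, and to the quotient $(q;q)_n/(q;q)_{2n}$, this reduces the $i$-th summand to $(q;q)_n^{k}\, q^i/(q;q)_i$ up to an error $\doteq_n 0$. Summing over $i$ from $0$ to $n$ and passing to the formal limit $n\to\infty$ then yields $(q;q)_\infty^{k}\sum_{i=0}^{\infty} q^i/(q;q)_i$.

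The main obstacle, and the step I would verify with the greatest care, is the bookkeeping of the $(-1)^{x+y+z}$ signs and the half-integer exponents $q^{-(x+y+z)/2}$ contributed by (\ref{pac}) when raised to the $k$-th power, together with the change of variable $A=q^{1/4}$ in passing from the unnormalized invariant $[ST_k]_{2n}$ to $J_{2n+1,ST_k}$. A miscount of a single half-power of $q$ would shift every coefficient of the tail and invalidate the $\doteq_n$ comparison; in particular one must confirm that the $(-1)^n q^{-n/2}$ prefactors, raised to the $k$-th power and combined with $\Delta_{2n}^{-1}$, absorb cleanly into the normalization (possibly up to sign, which is invisible under $\doteq_n$). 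Once this accounting is settled, the stabilization argument above is mechanical and the theorem follows.
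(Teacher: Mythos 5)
Your route is exactly the paper's: start from the closed formula for $J_{2n+1,ST_k}$, expand $\theta(2n,2n,2i)/\theta(n,n,2i)$ via (\ref{pac}) using the interior colors $(n-i,i,i)$ and $(2n-i,i,i)$, discard the Pochhammer ratios that are $\doteq_n 1$, and let $n\to\infty$. Your identification of the interior colors and your list of stabilizing factors are correct. There is, however, a genuine error in the reduction, and it sits precisely at the step you flag only in passing. After pairing off the stabilizing ratios $(q;q)_{2n-i}/(q;q)_{2n}$, $(q;q)_{2n+i+1}/(q;q)_{2n}$ and $(q;q)_n/(q;q)_{n+i+1}$, what survives of $\theta(2n,2n,2i)/\theta(n,n,2i)$ is a unit monomial times $(q;q)_n/(q;q)_{n-i}$. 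This factor does \emph{not} stabilize to $1$: its deviation begins at degree $n-i+1$, which is small exactly when $i$ is close to $n$, i.e.\ exactly where $\Delta_{2i}/\Delta_{2n}\doteq_n q^{n-i}$ contributes low-degree terms. Raised to the $k$-th power it contributes $(q;q)_n^k/(q;q)_{n-i}^k$, so after the reindexing $j=n-i$ the $j$-th summand is $(q;q)_n^k\,q^{j}/(q;q)_j^{k}$, not $(q;q)_n^k\,q^{j}/(q;q)_j$ as you wrote. The derivation you describe therefore yields
\begin{equation*}
T_{ST_k}(q)=(q;q)_{\infty}^{k}\sum_{i=0}^{\infty}\frac{q^{i}}{(q;q)_i^{k}},
\end{equation*}
which coincides with your (and the paper's) statement only for $k=1$.

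The discrepancy is not cosmetic. By Euler's identity $\sum_{i\ge 0}q^i/(q;q)_i=1/(q;q)_{\infty}$, the expression you assert collapses to $(q;q)_{\infty}^{k-1}$; for $k=2$ that would make the tail equal to $(q;q)_{\infty}$, contradicting Theorem \ref{second side} and Corollary \ref{identity}, whose common value is the false theta function $\Psi(q^3,q)=1-q+q^3-q^6+\cdots\neq(q;q)_{\infty}$. The corrected form $(q;q)_{\infty}^{2}\sum_i q^i/(q;q)_i^{2}$ does agree with $\Psi(q^3,q)$ and hence with Theorem \ref{second side}. (The same $k$-th power is missing from the paper's printed final display, so your slip mirrors the source; but carrying out your own plan honestly forces the exponent $k$.) A second, minor bookkeeping point: $\Delta_{2i}/\Delta_{2n}\doteq_n q^{\,n-i}$, not $q^{i}$, so the weight $q^{j}$ pairs with $(q;q)_j^{k}$ only after the substitution $j=n-i$; the paper's intermediate step replacing $(q;q)_{n-i}$ by $(q;q)_i$ without simultaneously reindexing $\Delta_{2i}$ conceals this, and you should make the reindexing explicit.
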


The special case when $k=2$ can be computed using another method that gives rise to an interesting false theta function identity. We do this by utilizing another method to evaluate the colored Jones polynomial of the singular knot $ST_{2}$. Using  Definition \ref{main definition} we see that $[ST_2]_{2n}$ is equal to the evaluation of following skein element :
\begin{eqnarray} 
\label{1111}
  [ST_2]_{2n} = \hspace{10pt}
   \begin{minipage}[h]{0.11\linewidth}
        \vspace{0pt}
        \scalebox{0.22}{\includegraphics{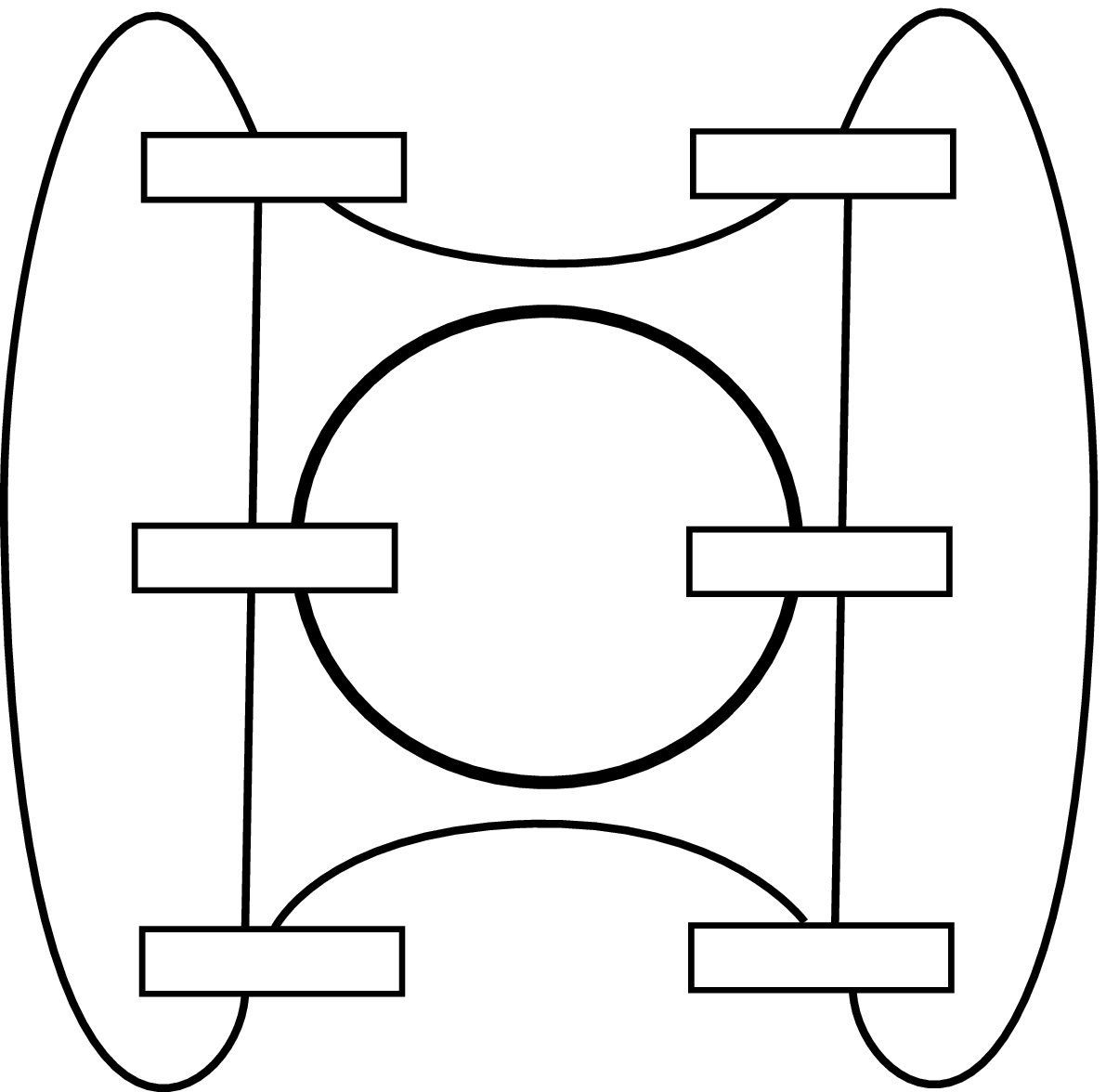}}
        \put(-38,+28){\footnotesize{$n$}}
        \put(-87,+45){\footnotesize{$2n$}}
        \put(5,+22){\footnotesize{$2n$}}
   \end{minipage} 
   \end{eqnarray}
This can be used to show the following result.

\begin{theorem}
\label{second side}
\begin{equation}
\label{1}
T_{ST_2}\doteq_n (q;q)_n \sum_{i=0}^n \frac{q^{i^2+i}}{(q;q)^2_i}
\end{equation}
\end{theorem}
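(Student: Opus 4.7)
The plan is to compute $[ST_2]_{2n}$ via the alternative skein representation in equation (\ref{1111}), which should yield a second closed formula for $J_{2n+1,ST_2}(q)$ whose tail has the shape stated in the theorem. The point of doing this by a different route than Theorem \ref{first side} is that, because the tail is unique, the two formulas will automatically give a (false) theta identity at the level of $q$-series.

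First I would apply the fusion identity (\ref{fusion}) along the two $n$-colored arcs visible in (\ref{1111}), expressing the skein element as $\sum_{i=0}^{n} c_{n,i}\, G_i$, where $G_i$ is an admissible trivalent graph with an interior $2i$-colored edge connecting two trivalent vertices of colors $(n,n,2i)$ and two outer vertices of colors $(2n,2n,2i)$, and $c_{n,i}$ is a product of the prefactors $\Delta_{2i}/\theta(n,n,2i)$ coming from each fusion. Next I would reduce each $G_i$ to a scalar by applying the bubble relation (\ref{bubble_1}) and, where a tetrahedral sub-graph appears, the tetrahedron-to-theta relation (\ref{firsty}). Because the outer shape forces a different tetrahedral coefficient from the one that arose in Theorem \ref{first side}, the resulting closed form will look like
\begin{equation*}
J_{2n+1,ST_2}(q) \;=\; \frac{1}{\Delta_{2n}}\sum_{i=0}^{n} \Delta_{2i}\,\frac{\mathrm{Tet}\!\left[\begin{smallmatrix}2n & n & n \\ n & 2n & 2i\end{smallmatrix}\right]^{\!2}}{\theta(n,n,2i)^{\,2}\,\theta(2n,2n,2i)}
\end{equation*}
or an analogous combination of $\theta$- and tetrahedron-evaluations, all of which can be rewritten as $q$-Pochhammer ratios via (\ref{pac}).

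Once this closed form is in hand, I would extract the tail by the same Pochhammer-asymptotic bookkeeping used in the proof of Theorem \ref{first side}: ratios of the form $(q;q)_{n+s}/(q;q)_{n+t}$ with $0 \le s,t$ fixed satisfy $\doteq_n 1$, so after cancellation everything that depends on $n$ through $(q;q)_{2n+\cdot}$ or $(q;q)_{n+i+\cdot}$ with $i \le n$ collapses to $1$ modulo $q^{n+1}$, leaving only the residual $(q;q)_n$ factor and the terms that depend essentially on the summation index $i$. The $q^{i^2+i}/(q;q)_i^{2}$ summand should then emerge from the product of the tetrahedron coefficient with $\Delta_{2i}/\theta(n,n,2i)^{2}$, since the interior colors of $\theta(n,n,2i)$ are $(n-i, i, i)$, producing a quadratic $q$-exponent in $i$ after the asymptotic simplification.

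The main obstacle will be the $q$-power bookkeeping in the middle step: showing cleanly that, after summing the tetrahedral and theta contributions, the overall $q$-exponent attached to the $i$-th term is exactly $i^2 + i$ (as opposed to some shifted or dilated quadratic in $i$). The Pochhammer-cancellation step at the end is then routine, essentially identical in spirit to equations (\ref{0}) and the displays immediately following them in the proof of Theorem \ref{first side}, and the $q^{i^2+i}$ factor is what will distinguish this second formula from the first and give the desired false theta function identity once both tails are equated.
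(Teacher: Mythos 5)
Your overall strategy --- evaluate $[ST_2]_{2n}$ by a second, independent route so that uniqueness of the tail yields the false theta identity --- is the right one, and it is the paper's strategy too. But the specific second route you choose does not work. Applying the fusion identity (\ref{fusion}) to the two $n$-colored strands of the bubble in (\ref{1111}) produces the expansion in the ``horizontal'' basis with a $2i$-colored rung, and this is exactly the decomposition already used in Section \ref{sec4} to compute $J_{2n+1,ST(k,l)}$; for $k=2$, $l=0$ it simply returns the closed form of Theorem \ref{first side}, namely $\frac{1}{\Delta_{2n}}\sum_{i}\Delta_{2i}\,\theta(2n,2n,2i)^{2}/\theta(n,n,2i)^{2}$, whose tail is $(q;q)_{\infty}^{2}\sum_{i}q^{i}/(q;q)_{i}$. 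The skein element is the same however the diagram is drawn, so ``the outer shape forces a different tetrahedral coefficient'' is not correct: the same fusion expansion gives the same coefficients, and indeed your trial closed form differs from the correct $k=2$ formula by a stray factor of $\theta(2n,2n,2i)$ once you substitute $\mathrm{Tet}\bigl[\begin{smallmatrix}2i&n&n\\ n&2n&2n\end{smallmatrix}\bigr]=\theta(2n,2n,2i)$. No amount of Pochhammer bookkeeping applied to that sum will produce the quadratic exponent $q^{i^{2}+i}$; to pass from the form you would obtain to the form claimed in Theorem \ref{second side} you would need precisely the identity of Corollary \ref{identity}, which makes the whole argument circular.

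The missing idea is to expand the bubble in the \emph{other} basis. The paper applies the bubble skein formula of \cite{Hajij1}, which writes the two parallel $n$-colored strands between the $2n$-idempotents as $\sum_{i=0}^{n}\left\lceil\begin{smallmatrix}n&n\\ n&n\end{smallmatrix}\right\rceil_{i}$ times elements whose parallel strands carry the colors $n+i$ (the analogue of the basis $B_V$ rather than $B_H$, related to your expansion by the recoupling identity (\ref{change of basis})). Closing these up gives $[ST_2]_{2n}=\sum_{i}\left\lceil\begin{smallmatrix}n&n\\ n&n\end{smallmatrix}\right\rceil_{i}\Delta_{2n}^{2}/\Delta_{n+i}$, and it is the coefficients $\left\lceil\cdot\right\rceil_{i}$ --- not any theta or tetrahedron ratio --- that carry the leading power $q^{i^{2}+i}$. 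The tail extraction is then not done by hand but quoted from Lemma 4.10(1) of \cite{Hajij2}. To make your proposal work you would have to either invoke that lemma directly, or recouple from your $2i$-rung basis to the $n+i$ basis via the $6j$-symbols, which amounts to re-deriving the bubble expansion.
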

\begin{proof}
We use the bubble skein formula given in \cite{Hajij1} on the bubble showing in skein element on the right hand side of  equation (\ref{1111}), we obtain 

\begin{eqnarray}  
  [ST_2]_{2n} &=& \displaystyle\sum\limits_{i=0}^{n}
   \left\lceil 
\begin{array}{cc}
n & n \\ 
n & n%
\end{array}%
\right\rceil _{i} \hspace{10pt}
   \begin{minipage}[h]{0.11\linewidth}
        \vspace{0pt}
        \scalebox{0.22}{\includegraphics{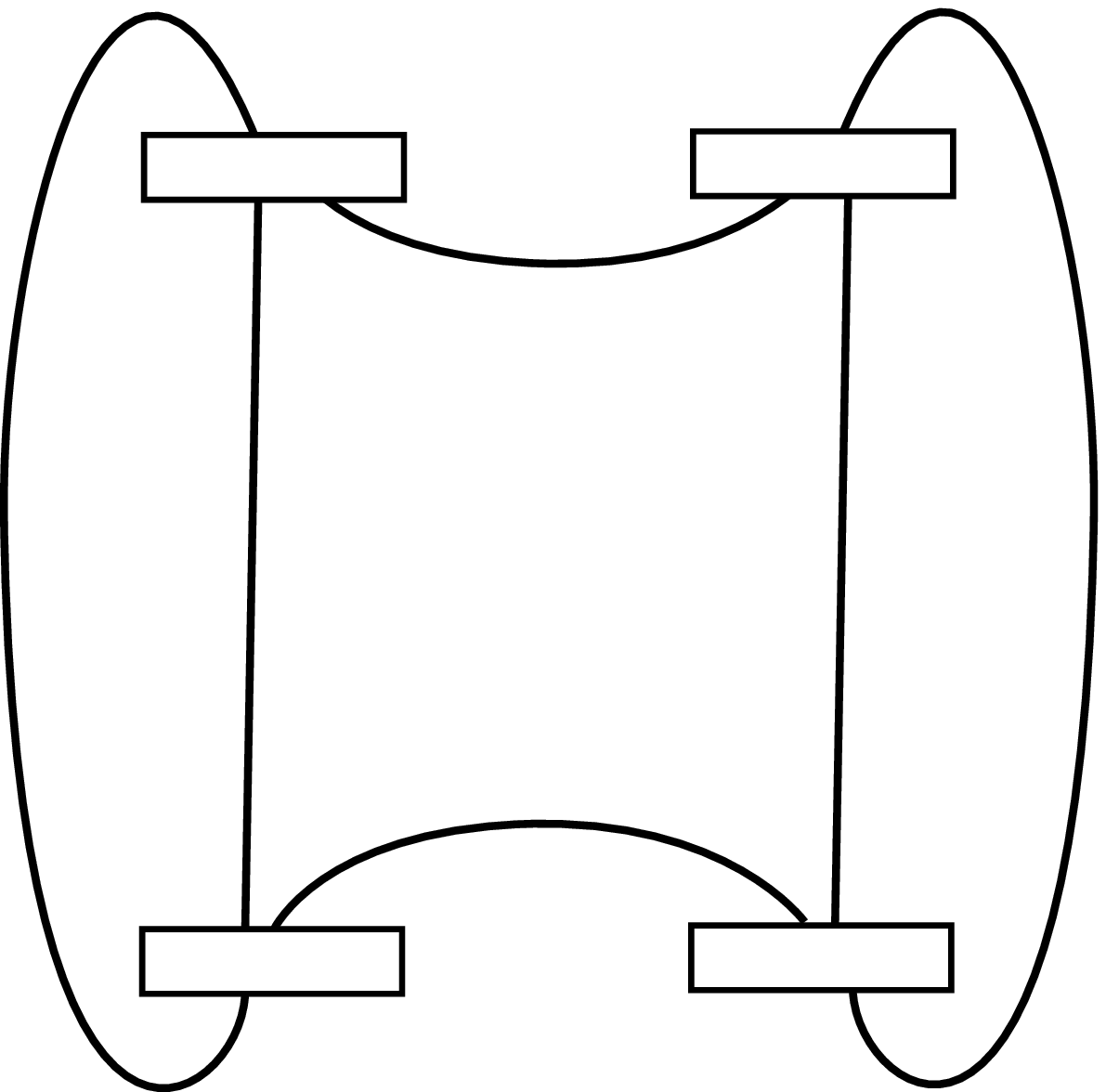}}
        \put(-49,+25){\footnotesize{$n+i$}}
         \put(-49,+45){\footnotesize{$n+i$}}
        \put(-85,+55){\footnotesize{$2n$}}
        \put(5,+22){\footnotesize{$2n$}}
   \end{minipage}
   \\&=& \displaystyle\sum\limits_{i=0}^{n}
   \left\lceil 
\begin{array}{cc}
n & n \\ 
n & n%
\end{array}%
\right\rceil _{i} \frac{\Delta_{2n}^2}{\Delta_{n+i}}.
   \end{eqnarray}
For the definition of the coefficient $ \left\lceil 
\begin{array}{cc}
n & n \\ 
n & n%
\end{array}%
\right\rceil _{i}$ see \cite{Hajij2} Theorem $2.4$. Hence,
\begin{equation}
J_{2n,ST_k}=\displaystyle\sum\limits_{i=0}^{n}
   \left\lceil 
\begin{array}{cc}
n & n \\ 
n & n%
\end{array}%
\right\rceil _{i} \frac{\Delta_{2n}}{\Delta_{n+i}}
\end{equation}

By Lemma 4.10 part (1) of \cite{Hajij2} we have

\begin{equation}
\displaystyle\sum\limits_{i=0}^{n}
   \left\lceil 
\begin{array}{cc}
n & n \\ 
n & n%
\end{array}%
\right\rceil _{i} \frac{\Delta_{2n}}{\Delta_{n+i}}\doteq_n (q;q)_n \sum_{i=0}^n \frac{q^{i^2+i}}{(q;q)^2_i}
\end{equation}
The result follows.
\end{proof}
Theorems \ref{first side} and \ref{second side} imply immediately the following:

\begin{corollary}
\label{identity}
The following identity holds : 
\begin{equation}
 (q;q)_{\infty} \sum_{i=0}^{\infty} \frac{q^{i^2+i}}{(q;q)^2_i}=(q;q)^2_{\infty}\sum_{i=0}^{\infty} \frac{q^i}{(q;q)_{i}}
\end{equation}
\end{corollary}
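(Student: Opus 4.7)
The plan is to observe that both Theorem \ref{first side} (specialized at $k=2$) and Theorem \ref{second side} compute the \emph{same} formal power series, namely the tail $T_{ST_2}(q)$ of the colored Jones polynomial of the singular torus knot $ST_2$. Since the tail of a stable sequence, when it exists, is uniquely determined in $\mathbb{Z}[[q]]$ by the condition $T_{\mathcal{P}}(q) \doteq_n P_n(q)$ for all $n$, the two right-hand sides must coincide as formal power series.

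Concretely, I would first specialize Theorem \ref{first side} to $k=2$ to obtain
\begin{equation*}
T_{ST_2}(q) = (q;q)_\infty^2 \sum_{i=0}^\infty \frac{q^i}{(q;q)_i}.
\end{equation*}
Next, I would pass to the limit in Theorem \ref{second side}: since the finite sum $(q;q)_n \sum_{i=0}^n q^{i^2+i}/(q;q)_i^2$ stabilizes in $\mathbb{Z}[[q]]$ as $n\to\infty$ (the extra terms contribute only to coefficients of order higher than any prescribed $n$, because $q^{i^2+i}$ lies in $q^{n+1}\mathbb{Z}[[q]]$ whenever $i^2+i>n$), we may identify
\begin{equation*}
T_{ST_2}(q) = (q;q)_\infty \sum_{i=0}^\infty \frac{q^{i^2+i}}{(q;q)_i^2}.
\end{equation*}
Equating the two expressions yields the claimed identity.

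The only subtle point is justifying the passage to the infinite sum in the $\doteq_n$ relations and the uniqueness of $T_{ST_2}(q)$. For the former, one checks that both sides of each theorem are series in $\mathbb{Z}[q^{-1}][[q]]$ (in fact in $\mathbb{Z}[[q]]$ after the proper normalization) whose truncations agree to any finite order, so the limiting series is well-defined in $\mathbb{Z}[[q]]$; for the latter, if two elements of $\mathbb{Z}[[q]]$ agree modulo $q^n$ for every $n$, they are equal. With these standard remarks in place, the corollary is immediate from the two preceding theorems, and no further computation is required.
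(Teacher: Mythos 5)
Your proposal is correct and is essentially the paper's own argument: the paper derives the corollary by noting that Theorems \ref{first side} (at $k=2$) and \ref{second side} both compute the tail $T_{ST_2}(q)$, so the two expressions must agree. Your added remarks on the uniqueness of the tail in $\mathbb{Z}[[q]]$ and the passage from the truncated $\doteq_n$ statements to the infinite sums simply make explicit what the paper leaves implicit in its one-line deduction.
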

 The identity in Corollary \ref{identity} is a well-known false theta function identity. In fact both sides are equal to $\Psi(q^3,q)$ where, 
 \begin{equation}
\Psi(a,b)=\sum\limits_{i=0}^{\infty}a^{\frac{i(i+1)}{2}}b^{\frac{i(i-1)}{2}}-\sum\limits_{i=1}^{\infty}a^{\frac{i(i-1)}{2}}b^{\frac{i(i+1)}{2}} 
 \end{equation}

 See for instance page 169 of \cite{Andrews} or see page 200 in \cite{Ramanujan}. For a recent study of the previous identity, also related to the colored Jones polynomial, see also the work of Bringmann and Milas in \cite{milas}.

\section{Conclusion}

The existence of the tail of the colored Jones polynomial of singular links is still an open question. The tail of the colored Jones polynomial of non-singular links exists for adequate links. The question of an analogue of adequate links in the singular case seems to be an interesting question that is worth pursuing.

\subsection{Acknowledgment}
We would like to thank Antun Milas for useful conversations and suggesting a correction in this paper.

\end{document}